\newtheorem{thm}{Theorem}[section]
\newtheorem{lem}[thm]{Lemma}
\newtheorem{proposition}[thm]{Proposition}
\newtheorem{example}[thm]{Example}
\newtheorem{defn}[thm]{Definition}
\begin{document}
\def\Ad{{\rm Ad}}
\def\diag{{\rm diag}}
\def\End{{\rm End}}
\def\Fr{{\rm Fr}}
\def\Gal{{\rm Gal}}
\def\GL{{\rm GL}}
\def\Aff{{\rm Aff}}
\def\Id{{\rm I}}
\def\Norm{{\rm Norm}}
\def\Nrd{{\rm Nrd}}
\def\Stab{{\rm Stab}}
\def\Id{{\rm Id}}
\def\fa{{\mathfrak a}}
\def\haut{{\rm ht}}
\def\imag{{\rm im}}
\def\integers{{\mathfrak o}}
\def\fR{{\mathfrak R}}
\def\LieN{{\rm n}}
\def\re{{\rm Re}}
\def\Rep{{\rm Rep}}
\def\WDRep{{\rm WDRep}}
\def\rec{{\rm rec}}
\def\Hom{{\rm Hom}}
\def\Ind{{\rm Ind}}
\def\cInd{{\rm c\!\!-\!\!Ind}}
\def\pr{{\rm pr}}
\def\supp{{\rm supp}}
\def\Mod{{\rm Mod}}
\def\ii{{\rm i}}
\def\rr{{\rm r}}
\def\Pbar{{\rm \overline P}}
\def\Nbar{{\rm \overline N}}
\def\sw{{\rm sw}}
\def\Sw{{\rm Sw}}
\def\Ar{{\rm A}}
\def\Sym{{\rm Sym}}
\def\Cusp{{\rm Cusp}}
\def\Irr{{\rm Irr}}
\def\Irrt{{{\rm Irr}^{\rm t}}}
\def\Psit{{{\Psi}^{\rm t}}}
\def\temp{{{\rm t}}}
\def\simple{{{\rm s}}}
\def\Irru{{{\rm Irr}^{\rm u}}}
\def\mes{{\rm mes}}
\def\Tor{{\rm Tor}}
\def\Nor{{\rm N}}
\def\ie{{\it i.e.,\,}}
\def\tr{{\rm tr}}
\def\Omegat{{{\Omega}^{\rm t}}}
\def\cD{{\mathcal D}}
\def\cE{{\mathcal E}}
\def\cH{{\mathcal H}}
\def\cL{{\mathcal L}}
\def\cO{{\mathcal O}}
\def\sA{{\mathcal A}}
\def\sV{{\mathcal V}}
\def\spin{{\rm sp}}
\def\SL{{\rm SL}}
\def\Sp{{\rm Sp}}
\def\SU{{\rm SU}}
\def\U{{\rm U}}
\def\red{{\rm nd}}
\def\CC{{\mathbb C}}
\def\QQ{{\mathbb Q}}
\def\RR{{\mathbb R}}
\def\ZZ{{\mathbb Z}}
\def\Afr{{\mathfrak A}}
\def\Bfr{{\mathfrak B}}
\def\Cfr{{\mathfrak C}}
\def\Ofr{{\mathfrak O}}
\def\av{{|\textrm{ }|}}
\def\FF{{F^\times}}
\def\F{{\mathbb F}}
\def\Z{{\mathbb Z}}
\def\N{{\mathbb N}}
\def\ZZ{{\mathbb{Z}^\times}}
\def\R{{\mathbb{R}}}
\def\T{{\mathbb T}}
\def\RR{{\mathbb{R}^\times}}
\def\C{{\mathbb C}}
\def\CC{{\mathbb{C}^\times}}
\def\slashi#1{\rlap{\sl/}#1}

\title{Functoriality and $K$-theory for $\GL_n(\mathbb{R})$ }
\author{Sergio Mendes and Roger Plymen}
\date{}
\maketitle

\begin{abstract}   We investigate base change and automorphic induction $\mathbb{C}/\mathbb{R}$
at the level of $K$-theory for the general linear group
$\GL_n(\mathbb{R})$. In the course of this study, we compute in
detail the $C^*$-algebra $K$-theory of this disconnected group. We
investigate the interaction of base change with the Baum-Connes
correspondence for $\GL_n(\R)$ and $\GL_n(\C)$. This article is
the archimedean companion of our previous article \cite{MP}.
\end{abstract}

\emph{Mathematics Subject Classification}(2000). 22D25, 22D10,
46L80.

\emph{Keywords}. General linear group, tempered dual, base change, $K$-theory.

\section{Introduction}

In the general theory of automorphic forms, an important role is
played by \emph{base change} and \emph{automorphic induction}, two
examples of the principle of functoriality in the Langlands program \cite{BS}. Base change
and automorphic induction have a global aspect and
a local aspect \cite{AC}\cite{He}.  In this article, we  focus on the
archimedean case of base change and automorphic induction for the general linear group
$\GL(n,\R)$, and we investigate these aspects of functoriality at the
level of $K$-theory.

For $\GL_n(\R)$ and $\GL_n(\C)$ we have the Langlands
classification and the associated $L$-parameters \cite{K}. We
recall that the domain of an $L$-parameter of $\GL_n(F)$ over an
archimedean field $F$ is the Weil group $W_F$. The Weil groups are
given by
\[
W_{\mathbb{C}}=\mathbb{C}^{\times}
\]
and
\[
W_{\mathbb{R}}=\langle j\rangle\mathbb{C}^{\times}
\]
where $j^2=-1 \in\mathbb{C}^{\times}$, $jc=\overline{c}j$ for all $c\in\mathbb{C}^{\times}$. Base change is defined by restriction of $L$-parameter from $W_{\R}$ to $W_{\C}$.

An $L$-parameter $\phi$ is \emph{tempered} if $\phi(W_F)$ is
bounded. Base change therefore determines a map of tempered duals.

In this article, we investigate the interaction of base change
with the Baum-Connes correspondence for $\GL_n(\R)$ and
$\GL_n(\C)$.

Let $F$ denote $\R$ or $\C$ and let $G = G(F) = \GL_n(F)$. Let
$C^{*}_{r}(G)$ denote the reduced $C^*$-algebra of $G$. The
Baum-Connes correspondence is a canonical isomorphism
\cite{BCH}\cite{CEN}\cite{La}
$$\mu_{F} : K^{G(F)}_{*}(\underline{E}G(F)) \rightarrow K_{*}C^{*}_{r}(G(F))$$
where $\underline{E}G(F)$ is a universal example for the action of
$G(F)$.

The noncommutative space $C^{*}_{r}(G(F))$ is strongly Morita
equivalent to the commutative $C^*$-algebra
$C_{0}(\mathcal{A}^t_n(F))$ where $\mathcal{A}^t_n(F)$ denotes the
tempered dual of $G(F)$, see \cite[\S 1.2]{Pl1}\cite{PP}. As a
consequence of this, we have
$$K_{*}C^{*}_{r}(G(F))\cong K^{*}\mathcal{A}^t_n(F).$$
This leads to the following formulation of the Baum-Connes
correspondence:
$$ K^{G(F)}_{*}(\underline{E}G(F))\cong K^{*}\mathcal{A}^t_n(F).$$

Base change and automorphic induction $\C/\R$ determine maps
$$\mathcal{BC}_{\C/\R} : \mathcal{A}^t_n(\R) \rightarrow \mathcal{A}^t_n(\C)$$
and
$$\mathcal{AI}_{\C/\R} : \mathcal{A}^t_n(\C) \rightarrow \mathcal{A}^t_{2n}(\R).$$
This leads to the following diagrams

\[\CD
K^{G(\C)}_{*}(\underline{E}G(\C)) @> {\mu_{\C}}> {}>K^{*}\mathcal{A}^t_n(\C))\\
@V{ }VV @VV{\mathcal{BC}_{\C/\R}^{*}} V \\
K^{G(\R)}_{*}(\underline{E}G(\R)) @>{}> {\mu_{\R}}>
K^{*}\mathcal{A}^t_n(\R).
\endCD
\]
and
\[\CD
K^{G(\R)}_{*}(\underline{E}G(\R)) @> {\mu_{\R}}> {}>K^{*}\mathcal{A}^t_{2n}(\R))\\
@V{ }VV @VV{\mathcal{AI}_{\C/\R}^{*}} V \\
K^{G(\C)}_{*}(\underline{E}G(\C)) @>{}> {\mu_{\C}}>
K^{*}\mathcal{A}^t_n(\C).
\endCD
\]
where the left-hand vertical maps are the unique maps which make the
diagrams commutative.

In section $2$ we describe the tempered dual
$\mathcal{A}^{t}_{n}(F)$ as a locally compact Hausdorff space.

In section $3$ we compute the $K$-theory for the reduced
$C^*$-algebra of $\GL(n,\mathbb{R})$.
The real reductive Lie group $\GL(n,\mathbb{R})$ is not connected.
If $n$ is even our formulas show that we always have non-trivial
$K^0$ and $K^1$. We also recall the $K$-theory for the reduced
$C^*$-algebra of the complex reductive group $\GL(n,\mathbb{C})$,
see \cite{PP}. In section $4$ we recall the Langlands parameters for $\GL(n)$ over archimedean local fields, see \cite{K}.
In section $5$ we compute the base change map $\mathcal{BC}:\mathcal{A}^{t}_{n}(\mathbb{R}) \to
\mathcal{A}^{t}_{n}(\mathbb{C})$ and prove that $\mathcal{BC}$ is a
continuous proper map. At the level of $K$-theory, base change is
the zero map for $n>1$ (Theorem \ref{main result archimedean base
change n>1}) and is nontrivial for $n=1$ (Theorem \ref{main result archimedean base change n=1}).
In section $6$, we compute the automorphic induction map $\mathcal{AI}:\mathcal{A}^{t}_{n}(\mathbb{C}) \to
\mathcal{A}^{t}_{2n}(\mathbb{R})$. Contrary to base change, at the level of $K$-theory, automorphic induction is nontrivial for every $n$
(Theorem \ref{main result automorphic induction n}). In section $7$, where we study the case $n=1$, base change for
$K^1$ creates a map
\[
\mathcal{R}(\U(1))\longrightarrow\mathcal{R}(\mathbb{Z}/2\mathbb{Z})
\]
where $\mathcal{R}(\U(1))$ is the representation ring of the
circle group $\U(1)$ and $\mathcal{R}(\mathbb{Z}/2\mathbb{Z})$ is
the representation ring of the group $\mathbb{Z}/2\mathbb{Z}$.
This map sends the trivial character of $\U(1)$ to
$1\oplus\varepsilon$, where $\varepsilon$ is the nontrivial
character of $\mathbb{Z}/2\mathbb{Z}$, and sends all the other
characters of $\U(1)$ to zero.

This map has an interpretation in terms of $K$-cycles. The
$K$-cycle \[(C_0(\R), L^2(\R),id/dx)\] is equivariant with respect
to $\CC$ and $\RR$, and therefore determines a class $\slashi
\partial_{\C} \in K_1^{\CC}(\underline{E}\CC)$ and a class
$\slashi \partial_{\R} \in K_1^{\RR}(\underline{E}\RR)$.  On the
left-hand-side of the Baum-Connes correspondence, base change in
dimension $1$ admits the following description in terms of Dirac
operators:

 \[\slashi \partial_{\C} \mapsto (\slashi \partial_{\R}, \slashi \partial_{\R})\]
This extends the results of \cite{MP} to archimedean fields.
\medskip

We thank Paul Baum for a valuable exchange of emails.

\section{On the tempered dual of $\GL(n)$}

Let $F = \mathbb{R}$.  In order to compute the $K$-theory of the
reduced $C^*$-algebra of $\GL(n,F)$ we need to parametrize the
tempered dual $\mathcal{A}^{t}_{n}(F)$ of $GL(n,F)$.

Let $M$ be a standard Levi subgroup of $\GL(n,F)$, i.e. a
block-diagonal subgroup. Let ${}^{0}M$ be the subgroup of $M$ such
that the determinant of each block-diagonal is $\pm 1$. Denote by
$X(M)=\widehat{M/{}^{0}M}$ the group of \textit{unramified
characters} of $M$, consisting of those characters which are
trivial on ${}^{0}M$.

Let $W(M)=N(M)/M$ denote the Weyl group of $M$. $W(M)$ acts on the
discrete series $E_{2}({}^{0}M)$ of ${}^{0}M$ by permutations.

Now, choose one element $\sigma\in{E_{2}({}^{0}M)}$ for each
$W(M)$-orbit. The \emph{isotropy subgroup} of $W(M)$ is defined to
be
$$W_{\sigma}(M)=\{\omega\in W(M): \omega .\sigma = \sigma\}.$$
Form the disjoint union
\begin{equation}\label{disjoint union}
\bigsqcup_{(M,\sigma)}X(M)/W_{\sigma}(M)=\bigsqcup_{M}\bigsqcup_{\sigma\in{E_{2}(^{0}M)}}X(M)/W_{\sigma}(M).
\end{equation}
The disjoint union has the structure of a locally compact, Hausdorff
space and is called the \textit{Harish-Chandra parameter space}. The
parametrization of the tempered dual
$\mathcal{A}^{t}_{n}(\mathbb{R})$ is due to Harish-Chandra, see
\cite{M}.

\begin{proposition} There exists a bijection
\begin{displaymath}
\begin{array}{cccc}
\bigsqcup_{(M,\sigma)}X(M)/W_{\sigma}(M)&\longrightarrow{}&\mathcal{A}^{t}_{n}(\mathbb{R})\\
\chi{}^{\sigma}&\mapsto & i_{GL(n),MN}(\chi{}^{\sigma}\otimes 1),\\
\end{array}
\end{displaymath}
where $\chi{}^{\sigma}(x):=\chi{(x)}\sigma{(x)}$ for all $x\in{M}$.
\end{proposition}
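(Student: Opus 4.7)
The plan is to deduce the proposition from Harish-Chandra's theory of tempered representations, specialized to $\GL(n,\mathbb{R})$, where the R-group theory simplifies drastically.

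First I would set up the map. Given a pair $(M,\sigma)$ with $M$ a standard Levi and $\sigma \in E_2({}^0M)$, and an unramified character $\chi \in X(M)$, the product $\chi^\sigma(x) = \chi(x)\sigma(x)$ is a well-defined tempered (indeed, discrete modulo center) representation of $M$, since $\chi$ is unitary on ${}^0M$-cosets and $\sigma$ is square-integrable on ${}^0M$. Unitary parabolic induction $i_{\GL(n),MN}$ from a standard parabolic $P = MN$ then produces a tempered representation of $\GL(n,\mathbb{R})$. So the map is well-defined on representatives, and I would need to show it descends to $X(M)/W_\sigma(M)$ (this follows because intertwining operators give $i(\chi^\sigma) \cong i((w\chi)^{w\sigma})$ for $w \in W(M)$, and the $W(M)$-orbit representatives $\sigma$ have been chosen; so only the stabilizer $W_\sigma(M)$ acts nontrivially on the $\chi$-factor).

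Next I would prove irreducibility of $i_{\GL(n),MN}(\chi^\sigma \otimes 1)$. This is the key input: for $\GL(n)$ over an archimedean field, the Knapp--Stein R-group of a discrete series of a Levi is trivial. Consequently, unitary parabolic induction from any discrete series representation of $M$ is irreducible. I would cite this from the standard references on the tempered dual of $\GL(n,\mathbb{R})$ (Knapp, Vogan). Given irreducibility, two induced representations are equivalent precisely when their inducing data are Weyl-conjugate; I would prove the converse by analyzing Harish-Chandra's intertwining integrals, whose composition with the inverse identifies exactly the $W(M)$-translates. Restricting to $W(M)$-orbit representatives for $\sigma$, the remaining ambiguity is $\chi \mapsto w\chi$ for $w \in W_\sigma(M)$, yielding injectivity on $\bigsqcup_{(M,\sigma)} X(M)/W_\sigma(M)$.

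For surjectivity, I would invoke Harish-Chandra's subquotient/subrepresentation theorem for the tempered dual: every irreducible tempered representation of $\GL(n,\mathbb{R})$ occurs as an irreducible constituent of $i_{\GL(n),MN}(\tau)$ for some standard parabolic $P=MN$ and some discrete series $\tau$ of $M$. Since discrete series of $M$ decompose as $\chi^\sigma$ with $\sigma \in E_2({}^0M)$ and $\chi \in X(M)$, and since the induced representations are already irreducible by the previous step, every tempered irreducible is of the form $i_{\GL(n),MN}(\chi^\sigma \otimes 1)$ for some such datum.

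The main obstacle is irreducibility of the induced representation, i.e.\ triviality of the R-group for $\GL(n,\mathbb{R})$; everything else is standard bookkeeping with parabolic induction and Weyl group orbits. Once irreducibility is in hand, the bijection follows by combining Harish-Chandra's surjectivity theorem with the intertwining-operator analysis that controls when two inducing data give equivalent representations.
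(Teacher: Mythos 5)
Your proposed proof is essentially correct, and it correctly isolates the single non-routine ingredient: the triviality of the Knapp--Stein R-group for every standard Levi of $\GL(n,\mathbb{R})$, which is what upgrades Harish-Chandra's ``subquotient'' statement to the assertion that $i_{\GL(n),MN}(\chi^\sigma\otimes 1)$ is already irreducible, so that the map lands in $\mathcal{A}^t_n(\mathbb{R})$ rather than merely in a list of inducing data. Worth noting, however, that the paper does not prove the proposition at all: it simply records it as due to Harish-Chandra and refers to M{\oe}glin's survey \cite{M}, after which the authors use the bijection to identify the Harish-Chandra parameter space with the tempered dual. So there is no ``paper's proof'' to compare against; what you have written is a reasonable reconstruction of the argument the citation is standing in for. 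Two small points of precision if you were to flesh this out: (i) the descent to $X(M)/W_\sigma(M)$ uses not only that intertwining operators identify $W(M)$-conjugate inducing data, but also that inequivalent pairs $(M,\sigma)$ with $\sigma$ ranging over chosen $W(M)$-orbit representatives give inequivalent induced representations, which is part of Harish-Chandra's disjointness theorem for tempered representations induced from distinct associate classes; and (ii) the statement that every discrete series of $M$ is of the form $\chi^\sigma$ with $\sigma\in E_2({}^0M)$ and $\chi\in X(M)$ deserves a sentence, since for $\GL(n,\mathbb{R})$ it reduces to the explicit description of discrete series of $\GL(1,\mathbb{R})$ and $\GL(2,\mathbb{R})$ which the paper recalls immediately afterwards.
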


In view of the above bijection, we will denote the Harish-Chandra
parameter space by $\mathcal{A}^{t}_{n}(\mathbb{R})$.

We will see now the particular features of the archimedean case,
starting with $GL(n,\mathbb{R})$. Since the discrete series of
$GL(n,\mathbb{R})$ is empty for $n\geq{3}$, we only need to consider
partitions of $n$ into 1's and 2's.
This allows us to to decompose $n$ as $n=2q+r$, where $q$ is the
number of 2's and $r$ is the number of 1's in the partition. To this
decomposition we associate the partition
$$n=(\underbrace{2,...,2}_{q},\underbrace{1,...,1}_{r}),$$
which corresponds to the Levi subgroup
$$M\cong\underbrace{GL(2,\mathbb{R})\times{...}\times{}GL(2,\mathbb{R})}_{q}\times\underbrace{GL(1,\mathbb{R})\times{...}\times{}GL(1,\mathbb{R})}_{r}.$$

Varying $q$ and $r$ we determine  a representative in each
equivalence class of Levi subgroups. The subgroup $^{0}M$ of $M$ is
given by
$${}^{0}M\cong\underbrace{SL^{\pm}(2,\mathbb{R})\times{...}\times{}SL^{\pm}(2,\mathbb{R})}_{q}\times\underbrace{SL^{\pm}(1,\mathbb{R})\times{...}\times{}SL^{\pm}(1,\mathbb{R})}_{r},$$
where
$$SL^{\pm}(m,\mathbb{R})=\{g\in{GL(m,\mathbb{R})}:|det(g)|=1\}$$
is the \textit{unimodular subgroup} of $GL(m,\mathbb{R})$. In
particular,
$SL^{\pm}(1,\mathbb{R})=\{\pm{1}\}\cong\mathbb{Z}/2\mathbb{Z}$.

The representations in the discrete series of $GL(2,\mathbb{R})$,
denoted $\mathcal{D}_{\ell}$ for $\ell\in\mathbb{N}$ $(\ell\geq 1)$
are induced from $SL(2,\mathbb{R})$ \cite[p.399]{K}:
\[
\mathcal{D}_{\ell}=ind_{SL^{\pm}(2,\mathbb{R}),SL(2,\mathbb{R})}(\mathcal{D}^{\pm}_{\ell}),
\]
where $\mathcal{D}^{\pm}_{\ell}$ acts in the space
\[
\{f:\mathcal{H}\rightarrow\mathbb{C}|f \textrm{ analytic
},\|f\|^2=\int\int|f(z)|^2y^{\ell-1})dxdy<\infty\}.
\]
Here, $\mathcal{H}$ denotes the Poincar\'{e} upper half plane. The
action of $g= \left( \begin{array}{cc}
 a & b  \\
 c  & d
\end{array} \right)$ is given by
\[
\mathcal{D}^{\pm}_{\ell}(g)(f(z))=(bz+d)^{-(\ell+1)}f(\frac{az+c}{bz+d}).
\]

More generally, an element $\sigma$ from the discrete series
$E_{2}({}^{0}M)$ is given by
\begin{equation}
\sigma{=}i_{G,MN}(\mathcal{D}^{\pm}_{\ell_1}\otimes{...}\otimes{\mathcal{D}^{\pm}_{\ell_q}}\otimes{\tau_{1}}\otimes{...}\otimes\tau_{r}\otimes
1),
\end{equation}
where $\mathcal{D}^{\pm}_{\ell_i}$ $(\ell_i\geq 1)$ are the
discrete series representations of $SL^{\pm}(2,\mathbb{R})$ and
$\tau_{j}$ is a representation of
$SL^{\pm}(1,\mathbb{R})\cong\mathbb{Z}/2\mathbb{Z}$, i.e.
$id=(x\mapsto x)$ or $sgn=(x\mapsto\frac{x}{|x|})$.

Finally we will compute the unramified characters $X(M)$, where $M$
is the Levi subgroup associated to the partition $n=2q+r$.

Let $x\in{GL(2,\mathbb{R})}$. Any character of $GL(2,\mathbb{R})$ is
given by
$$\chi{(det(x))}=(sgn(det(x)))^{\varepsilon}|det(x)|^{it}$$
$(\varepsilon=0,1, t\in\mathbb{R})$ and it is unramified provided
that
$$\chi{(det(g))}=\chi{(\pm{1})}=(\pm{1})^{\varepsilon}=1, \textrm{ for all }g\in{SL^{\pm}}(2,\mathbb{R}).$$
This implies $\varepsilon = 0$ and any unramified character of
$GL(2,\mathbb{R})$ has the form
\begin{equation}\label{unramified character GL(2)}
\chi{(x)}=|det(x)|^{it},\textrm{ for some }t\in\mathbb{R}.
\end{equation}
Similarly, any unramified character of
$GL(1,\mathbb{R})=\mathbb{\mathbb{R}}^{\times}$ has the form
\begin{equation}\label{unramified character GL(1)}
\xi{(x)}=|x|^{it},\textrm{ for some }t\in\mathbb{R}.
\end{equation}
Given a block diagonal matrix $diag(g_{1},...,g_{q},\omega_{1},...,
\omega_{r})\in M$, where $g_{i}\in{GL(2,\mathbb{R})}$ and
$\omega_{j}\in{GL(1,\mathbb{R})}$, we conclude from (\ref{unramified
character GL(2)}) and (\ref{unramified character GL(1)}) that any
unramified character $\chi\in{X(M)}$ is given by
$$\hskip -5.0cm \chi(diag(g_{1},...,g_{q},\omega_{1},..., \omega_{r}))=$$
$$=|det(g_{1})|^{it_{1}}\times{}...\times{}|det(g_{q})|^{it_{q}}\times{}|\omega_{1}|^{it_{q+1}}\times{}...\times{}|\omega_{r}|^{it_{q+r}},$$
for some $(t_{1},...,t_{q+r})\in\mathbb{R}^{q+r}$. We can denote
such element $\chi\in X(M)$ by $\chi_{(t_{1},...,t_{q+r})}$. We have
the following result.

\begin{proposition}\label{X(M),R} Let $M$ be a Levi subgroup of $GL(n,\mathbb{R})$, associated to the partition
$n=2q+r$.
Then, there is a bijection
$$X(M) \rightarrow \mathbb{R}^{q+r} \textrm{ ,
}\chi_{(t_{1},...,t_{q+r})}\mapsto (t_{1},...,t_{q+r}).$$
\end{proposition}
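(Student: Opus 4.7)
The plan is simply to package the two single-factor computations already carried out into a statement about the product group $M$. Since $M$ is a direct product of $q$ copies of $\GL(2,\R)$ and $r$ copies of $\GL(1,\R)$, and $^0M$ is the corresponding product of unimodular subgroups, I would first observe that the quotient $M/^0M$ is itself a direct product:
\[
M/^0M \;\cong\; \bigl(\GL(2,\R)/SL^\pm(2,\R)\bigr)^{q} \times \bigl(\GL(1,\R)/SL^\pm(1,\R)\bigr)^{r} \;\cong\; (\R_{>0})^{q+r},
\]
with the isomorphism given by $|\det|$ on each $\GL(2,\R)$-block and $|\cdot|$ on each $\GL(1,\R)$-block. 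Consequently, by Pontryagin duality (and the fact that the character group of a direct product is the direct product of the character groups), $X(M)=\widehat{M/^0M}$ decomposes as a direct product of copies of $\widehat{\R_{>0}}$.

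Next I would invoke the explicit forms established immediately before the proposition: any unramified character of the $i$-th $\GL(2,\R)$-factor is $|\det(g_i)|^{it_i}$ for a unique $t_i \in \R$ (equation (\ref{unramified character GL(2)})), and any unramified character of the $j$-th $\GL(1,\R)$-factor is $|\omega_j|^{it_{q+j}}$ for a unique $t_{q+j}\in\R$ (equation (\ref{unramified character GL(1)})). Multiplying these together, and using that every character of the product is the product of characters of the factors, shows that every $\chi\in X(M)$ has the form $\chi_{(t_1,\dots,t_{q+r})}$ displayed above the proposition. This gives surjectivity of the proposed map.

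For injectivity I would evaluate $\chi_{(t_1,\dots,t_{q+r})}$ on block-diagonal elements with only one nontrivial entry: for instance, taking $g_i = \diag(2,1)$ and all other blocks trivial recovers $2^{it_i}$, from which $t_i$ is uniquely determined; likewise for the $\GL(1,\R)$-coordinates by taking $\omega_j = 2$. Hence distinct tuples yield distinct characters.

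There is no real obstacle in this proof: the content is entirely contained in the single-factor calculations (\ref{unramified character GL(2)}) and (\ref{unramified character GL(1)}) together with the fact that the unramified-character functor respects direct products. The only point requiring any care is to note that the condition of being trivial on $^0M$ is equivalent, factor by factor, to triviality on each $SL^\pm(m,\R)$, which is immediate from the product decomposition of $^0M$.
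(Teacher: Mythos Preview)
Your proposal is correct and matches the paper's approach: the paper does not give a separate proof but states the proposition as an immediate consequence of the preceding paragraph, where the general form of an unramified character of $M$ is written out explicitly from the single-factor computations (\ref{unramified character GL(2)}) and (\ref{unramified character GL(1)}). Your write-up simply makes the surjectivity/injectivity and the product decomposition of $M/{}^0M$ explicit, which is a faithful (slightly more detailed) rendering of the same argument.
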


Let us consider now $\GL(n,\mathbb{C})$. The tempered dual of
$\GL(n,\mathbb{C})$ comprises the \emph{unitary principal series}
in accordance with Harish-Chandra \cite[p. 277]{HC}. The
corresponding Levi subgroup is a maximal torus $T
\cong(\mathbb{C}^{\times})^{n}$. It follows that $^{0}T
\cong\mathbb{T}^{n}$ the compact $n$-torus.

The principal series representations are given by
\begin{equation}
\pi_{\ell,it}=i_{G,TU}(\sigma\otimes 1),
\end{equation}
where $\sigma{=}\sigma_{1}\otimes{...}\otimes\sigma_{n}$ and
$\sigma_{j}(z)=(\frac{z}{|z|})^{\ell_{j}}|z|^{it_{j}}$
($\ell_{j}\in\mathbb{Z}$ and $t_{j}\in\mathbb{R}$).

An unramified character is given by
\begin{displaymath}
\chi \left( \begin{array}{cccc}
 z_{1} &  &   \\
   & \ddots &   \\
   &   &  z_{n}
\end{array} \right)
=|z_{1}|^{it_{1}}\times{}...\times{}|z_{n}|^{it_{n}}
\end{displaymath}
and we can represent $\chi$ as $\chi_{(t_{1},...,t_{n})}$.
Therefore, we have the following result.

\begin{proposition}\label{X(M),C}Denote by $T$ the standard
maximal torus in $GL(n,\mathbb{C})$. There is a bijection
$$X(T) \rightarrow \mathbb{R}^{n} \textrm{ ,
}\chi_{(t_{1},...,t_{n})}\mapsto (t_{1},...,t_{n}).$$
\end{proposition}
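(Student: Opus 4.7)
The plan is to mimic the derivation that precedes Proposition \ref{X(M),R}: identify the quotient $T/{}^0T$, classify its unitary characters, and then read off the asserted bijection by matching the parametrization with the one given in the statement.

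First I would observe that since $T\cong(\mathbb{C}^\times)^n$ and ${}^0T\cong\mathbb{T}^n$, taking coordinate-wise quotients and using the polar decomposition $\mathbb{C}^\times=\mathbb{R}_{>0}\times\mathbb{T}$ gives
\[
T/{}^0T\;\cong\;(\mathbb{C}^\times/\mathbb{T})^n\;\cong\;(\mathbb{R}_{>0})^n,
\]
where the identification sends $z\mapsto |z|$ coordinate-wise. Composing with the logarithm yields a topological group isomorphism $(\mathbb{R}_{>0})^n\cong\mathbb{R}^n$, so by Pontrjagin duality $X(T)=\widehat{T/{}^0T}\cong\widehat{\mathbb{R}^n}\cong\mathbb{R}^n$ as a topological group. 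This already shows the existence of a continuous bijection, but I would then make it explicit in the coordinates used by the proposition.

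For the explicit form, I would recall that any continuous character of $\mathbb{C}^\times$ is of the shape $z\mapsto (z/|z|)^{\ell}|z|^{it}$ with $\ell\in\mathbb{Z}$ and $t\in\mathbb{R}$, and the unramification condition forces triviality on $\mathbb{T}$, hence $\ell=0$. Each factor therefore reduces to $z\mapsto|z|^{it_j}$, and multiplying over the $n$ factors yields
\[
\chi_{(t_1,\ldots,t_n)}(\diag(z_1,\ldots,z_n))\;=\;|z_1|^{it_1}\cdots|z_n|^{it_n},
\]
which is exactly the formula displayed just before the statement of the proposition.

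Finally, surjectivity of $(t_1,\ldots,t_n)\mapsto\chi_{(t_1,\ldots,t_n)}$ is built into the construction, and injectivity follows by evaluating the character on diagonal matrices $\diag(e^{s_1},\ldots,e^{s_n})$: the resulting function $(s_1,\ldots,s_n)\mapsto e^{i(s_1t_1+\cdots+s_nt_n)}$ uniquely determines the tuple $(t_1,\ldots,t_n)\in\mathbb{R}^n$. There is no real obstacle here; the proof is just the pointwise classification of characters of $\mathbb{C}^\times$ together with the factorization of $T$, in complete parallel with the derivation of Proposition \ref{X(M),R}.
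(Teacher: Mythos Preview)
Your argument is correct and follows exactly the line the paper takes: the paper derives, just before the proposition, that every unramified character of $T$ has the form $\chi_{(t_1,\dots,t_n)}(\diag(z_1,\dots,z_n))=|z_1|^{it_1}\cdots|z_n|^{it_n}$ and then simply records the bijection without a separate proof. You have supplied the same computation together with the routine injectivity check, so there is nothing to add.
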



\section{$K$-theory for $GL(n)$}

Using the Harish-Chandra parametrization of the tempered dual of
$\GL(n,\R)$ and $\GL(n,\C)$ (recall that the Harish-Chandra parameter space is a
locally compact, Hausdorff topological space) we can compute the
$K$-theory of the reduced $C^{*}$-algebras $C_{r}^{*}\GL(n,\R)$ and $C_{r}^{*}\GL(n,\C)$.

\subsection{$K$-theory for $\GL(n,\R)$}

We exploit the strong Morita equivalence described in \cite[\S 1.2
]{Pl1}. We infer that
\begin{equation}\label{K-theory GL(n,R)}
\begin{matrix}
 K_{j}(C_{r}^{*}\GL(n,\mathbb{R}))  &  =  &  K^{j}(\bigsqcup_{(M,\sigma)}X(M)/W_{\sigma}(M))  \cr
 & =  & \bigoplus_{(M,\sigma)}K^{j}(X(M)/W_{\sigma}(M)) \cr
 & =  & \hskip -0.2cm\bigoplus_{(M,\sigma)}K^{j}(\mathbb{R}^{n_{M}}/W_{\sigma}(M)), \cr
\end{matrix}
\end{equation}
where $n_{M}=q+r$ if $M$ is a representative of the equivalence
class of Levi subgroup associated to the partition $n=2q+r$. Hence
the $K$-theory depends on $n$ and on each Levi subgroup.

To compute (\ref{K-theory GL(n,R)}) we have to consider the following orbit spaces:
\begin{itemize}
\item[(i)] $\mathbb{R}^n$, in which case $W_{\sigma}(M)$ is the trivial subgroup of the Weil group $W(M)$;
\item[(ii)] $\mathbb{R}^n/S_n$, where $W_{\sigma}(M)=W(M)$ (this is one of the possibilities for the partition of $n$ into 1's);
\item[(iii)] $\mathbb{R}^n/(S_{n_1}\times...\times S_{n_k})$, where $W_{\sigma}(M)=S_{n_1}\times...\times S_{n_k}\subset W(M)$ (see the examples below).
\end{itemize}

\begin{defn}\label{closed cone}
An orbit space as indicated in $(ii)$ and $(iii)$ is called a closed cone.
\end{defn}

The $K$-theory for $\mathbb{R}^n$ may be summarized as follows

\begin{displaymath}
K^j(\mathbb{R}^n)=
\left\{ \begin{array}{ll}
 \mathbb{Z} \mbox{ if } n=j\mod 2\\
  0 \mbox{ otherwise }.
 \end{array} \right.
\end{displaymath}

The next results show that the $K$-theory of a closed cone vanishes.

\begin{lem}\label{lemma1}
$K^j(\mathbb{R}^n/S_n)=0, j=0,1.$
\end{lem}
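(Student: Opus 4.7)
The plan is to reduce the claim to the standard fact that a cone $C^*$-algebra has vanishing $K$-theory. Assume $n \ge 2$ (the case $n=1$ is vacuous, as $S_1$ is trivial). The closed Weyl chamber $C = \{y \in \mathbb{R}^n : y_1 \leq y_2 \leq \cdots \leq y_n\}$ is a fundamental domain for the $S_n$-action, so $\mathbb{R}^n/S_n$ is homeomorphic to $C$; the further substitution $(y_1,\ldots,y_n) \mapsto (y_1,\ y_2-y_1,\ \ldots,\ y_n-y_{n-1})$ then identifies $C$ with $\mathbb{R} \times [0,\infty)^{n-1}$.

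Consequently,
\[
C_0(\mathbb{R}^n/S_n) \;\cong\; C_0(\mathbb{R}) \otimes C_0([0,\infty))^{\otimes (n-1)}.
\]
The next step is to observe that $C_0([0,\infty))$ is a cone algebra: via the homeomorphism $t \mapsto t/(1+t)$ one has $C_0([0,\infty)) \cong \{f \in C[0,1] : f(1) = 0\}$, which is the cone on $\mathbb{C}$. A contracting $*$-homotopy is given by $\phi_s(f)(t) = f\bigl((1-s)t + s\bigr)$ for $s \in [0,1]$, satisfying $\phi_0 = \mathrm{id}$ and $\phi_1 = 0$; norm continuity in $s$ follows from uniform continuity of $f$ on $[0,1]$.

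Finally, tensoring a contractible $C^*$-algebra with any other $C^*$-algebra preserves contractibility (apply $\phi_s \otimes \mathrm{id}$), so $C_0(\mathbb{R}^n/S_n)$ is itself contractible and hence
\[
K^j(\mathbb{R}^n/S_n) \;=\; K_j\bigl(C_0(\mathbb{R}^n/S_n)\bigr) \;=\; 0 \qquad (j=0,1).
\]

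The main potential obstacle is verifying that the identification $\mathbb{R}^n/S_n \cong C$ is a genuine homeomorphism, i.e.\ that ``sort the coordinates'' is a continuous map $\mathbb{R}^n \to C$ that descends to a homeomorphism on the quotient; this is a routine but non-trivial topological check. Everything after that is standard $C^*$-algebraic $K$-theory.
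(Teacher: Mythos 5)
Your proof is correct, and it takes a genuinely different (and in places cleaner) route than the paper's. Both arguments begin by identifying $\mathbb{R}^n/S_n$ with the closed Weyl chamber $C=\{y_1\le\cdots\le y_n\}$, but from there they diverge. The paper parametrizes $C$ inductively via $(a,t)\mapsto(a,ta)$ with $t\in[1,\infty)$ and then invokes the K\"unneth formula, using that $[1,\infty)$ has trivial $K$-theory. Your coordinate change $(y_1,\ldots,y_n)\mapsto(y_1,y_2-y_1,\ldots,y_n-y_{n-1})$ is an affine linear bijection of $\mathbb{R}^n$, so the identification $C\cong\mathbb{R}\times[0,\infty)^{n-1}$ is immediate and unambiguous; the paper's multiplicative parametrization, by contrast, is only well-defined for $a_1>0$ (for $a_1\le 0$ one cannot find $t\ge1$ with $a_2=ta_1$ and $a_1\le a_2$), so your linear change of variables is not merely stylistic but actually repairs a gap. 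The second divergence is in the $K$-theory step: you show $C_0([0,\infty))$ is a cone algebra, hence contractible, and that tensoring with a contractible $C^*$-algebra preserves contractibility; this yields the vanishing of $K_*$ directly without any appeal to K\"unneth. Both routes are valid, but yours is more elementary and gives the stronger conclusion that the algebra itself is contractible. Your remark that the statement should be read with $n\ge 2$ is also appropriate, since for $n=1$ the group $S_1$ is trivial and $K^1(\mathbb{R})=\mathbb{Z}$; in the paper this restriction is implicit in the definition of a ``closed cone.''
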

\begin{proof}
We need the following definition. A point $(a_1,...,a_n)\in\mathbb{R}^n$ is called normalized if
$a_j\leq a_{j+1}$, for $j=1,2,...,n-1$. Therefore, in each orbit there is exactly one normalized point and
$\mathbb{R}^n/S_n$ is homeomorphic to the subset of $\mathbb{R}^n$ consisting of all normalized points of $\mathbb{R}^n$. We denote the set of all normalized points of $\mathbb{R}^n$ by $N(\mathbb{R}^n)$.

In the case of $n=2$, let $(a_1,a_2)$ be a normalized point of $\mathbb{R}^2$. Then, there is a unique $t\in[1,+\infty[$ such that $a_2=ta_1$ and the map
\[
\mathbb{R}\times[1,+\infty[\rightarrow N(\mathbb{R}^2) , (a,t)\mapsto (a,ta)
\]
is a homeomorphism.

If $n>2$ then the map
\[
N(\mathbb{R}^{n-1})\times[1,+\infty[\rightarrow N(\mathbb{R}^n) , (a_1,...,a_{n-1},t)\mapsto (a_1,...,a_{n-1},ta_n)
\]
is a homeomorphism. Since $[1,+\infty[$ kills both the $K$-theory groups $K^0$ and $K^1$, the result follows by applying K\"{u}nneth formula.
\end{proof}

The symmetric group $S_n$ acts on $\mathbb{R}^n$ by permuting the components. This induces an action of any subgroup $S_{n_1}\times...\times S_{n_k}$ of $S_n$ on $\mathbb{R}^n$. Write
\[
\mathbb{R}^n\cong\mathbb{R}^{n_1}\times\mathbb{R}^{n_2}\times...\times\mathbb{R}^{n_k}\times\mathbb{R}^{n-n_1-...-n_k}.
\]
If $n=n_1+...+n_k$ then we simply have $\mathbb{R}^n\cong\mathbb{R}^{n_1}\times...\times\mathbb{R}^{n_k}$.

The group $S_{n_1}\times...\times S_{n_k}$ acts on $\mathbb{R}^n$ as follows.\\
$S_{n_1}$ permutes the components of $\mathbb{R}^{n_1}$ leaving the remaining fixed;\\
$S_{n_2}$ permutes the components of $\mathbb{R}^{n_2}$ leaving the remaining fixed;\\
and so on. If $n>n_1+...+n_k$ the components of $\mathbb{R}^{n-n_1-...-n_k}$ remain fixed. This can be interpreted, of course, as the action of the trivial subgroup. As a consequence, one identify the orbit spaces
\[
\mathbb{R}^n/(S_{n_1}\times...\times S_{n_k})\cong\mathbb{R}^{n_1}/S_{n_1}\times...\times\mathbb{R}^{n_k}/S_{n_k}\times\mathbb{R}^{n-n_1-...-n_k}
\]

\begin{lem}
$K^j(\mathbb{R}^n/(S_{n_1}\times...\times S_{n_k})=0, j=0,1,$ where $S_{n_1}\times...\times S_{n_k}\subset S_n$.
\end{lem}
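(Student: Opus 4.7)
The plan is to reduce the statement immediately to Lemma~\ref{lemma1} using the product decomposition displayed just above the statement, together with the K\"unneth formula for topological $K$-theory of locally compact Hausdorff spaces.

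Concretely, I would appeal to the homeomorphism
\[
\mathbb{R}^n/(S_{n_1}\times\cdots\times S_{n_k})\;\cong\;\mathbb{R}^{n_1}/S_{n_1}\times\cdots\times\mathbb{R}^{n_k}/S_{n_k}\times\mathbb{R}^{n-n_1-\cdots-n_k},
\]
pass to $C_0$ of each factor, and invoke the K\"unneth theorem for $C^{\ast}$-algebra $K$-theory. Since each factor appearing has $K$-groups that are either $0$ or free abelian (by Lemma~\ref{lemma1} and the computation of $K^{\ast}(\mathbb{R}^m)$ recalled above), no $\mathrm{Tor}$ correction arises, and $K^{\ast}$ of the product is the graded tensor product of the $K^{\ast}$ of the factors. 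By Lemma~\ref{lemma1}, $K^j(\mathbb{R}^{n_i}/S_{n_i})=0$ for $j=0,1$ and every $i$ with $n_i\geq 2$; as soon as at least one such factor is present, every summand of the tensor product vanishes, proving the lemma.

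The only subtle point is the tacit hypothesis that some $n_i\geq 2$, so that Lemma~\ref{lemma1} actually produces a vanishing tensor factor: a block $S_1$ is trivial, would contribute a factor of $\mathbb{R}$, and could not by itself kill $K$-theory. In the isotropy groups $W_{\sigma}(M)$ arising in case (iii) above this is automatic, since a genuinely non-trivial symmetric block $S_{n_i}$ requires $n_i\geq 2$; thus no case relevant to the application is missed, and the lemma holds as stated. The main obstacle, to the extent there is one, is simply to verify cleanly that the K\"unneth formula applies to the spaces at hand, which is standard given their local compactness and the freeness of the $K$-groups involved.
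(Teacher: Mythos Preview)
Your proposal is correct and follows essentially the same line as the paper's proof: decompose the orbit space as a product of factors $\mathbb{R}^{n_i}/S_{n_i}$ (times possibly a Euclidean factor), apply the K\"unneth formula, and invoke Lemma~\ref{lemma1} to kill the $K$-theory via one of the factors with $n_i\geq 2$. The paper phrases this as an induction on $k$ starting from $k=2$, but the content is identical; your remark about the tacit hypothesis $n_i\geq 2$ is a useful clarification that the paper leaves implicit.
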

\begin{proof}
It suffices to prove for $\mathbb{R}^n/(S_{n_1}\times S_{n_2})$. The general case follows by induction on $k$.

Now, $\mathbb{R}^n/(S_{n_1}\times S_{n_2})\cong\mathbb{R}^{n_1}/S_{n_1}\times \mathbb{R}^{n-n_1}/S_{n_2}$. Applying the K\"{u}nneth formula and Lemma \ref{lemma1}, the result follows.
\end{proof}

We give now some examples by computing
$K_{j}C^{*}_{r}GL(n,\mathbb{R})$ for small $n$.

\begin{example}\label{K-theory GL(1,R)}
We start with the case of $GL(1,\mathbb{R})$. We have:
$$M=\RR \textrm{ , } ^{0}M=\mathbb{Z}/2\mathbb{Z} \textrm{ , }W(M)=1 \textrm{ and
}X(M)=\mathbb{R}.$$ Hence,
\begin{equation}
\mathcal{A}_{1}^{t}(\mathbb{R})\cong\bigsqcup_{\sigma\in\widehat{(\mathbb{Z}/2\mathbb{Z})}}\mathbb{R}/
1=\mathbb{R}\sqcup\mathbb{R},
\end{equation}
and the $K$-theory is given by
\begin{displaymath}
K_{j}C^{*}_{r}GL(1,\mathbb{R})\cong{K^{j}}(\mathcal{A}_{1}^{t}(\mathbb{R}))=K^{j}(\mathbb{R}\sqcup\mathbb{R})=K^{j}(\mathbb{R})\oplus{K^{j}}(\mathbb{R})=
\left\{ \begin{array}{ll}
 \mathbb{Z}\oplus\mathbb{Z} & ,j=1\\
 \hskip 0.5cm 0  &  ,j=0.
 \end{array} \right.
\end{displaymath}
\end{example}

\begin{example}\label{K-theory GL(2,R)} For $GL(2,\mathbb{R})$ we have two partitions of $n=2$ and the following data

\bigskip

\begin{tabular}{|c|c|c|c|c|c|}
\hline
\textbf{Partition} & $M$ & $^{0}M$ & $W(M)$ & $X(M)$ & $\sigma\in{E_{2}(^{0}M)}$\\
\hline
2+0 & $GL(2,\mathbb{R})$ & $SL^{\pm}(2,\mathbb{R})$ & $1$ & $\mathbb{R}$ &
$\sigma=i_{G,P}(\mathcal{D}^{+}_{\ell}),\ell\in\mathbb{N}$\\
1+1 & $(\mathbb{R}^\times)^2$ & $(\mathbb{Z}/2\mathbb{Z})^2$ & $\mathbb{Z}/2\mathbb{Z}$ & $\mathbb{R}^2$ &
$\sigma=i_{G,P}(id\otimes{sgn})$\\
\hline
\end{tabular}

\bigskip

Then the tempered dual is parameterized as follows
$$\mathcal{A}_{2}^{t}(\mathbb{R})\cong\bigsqcup_{(M,\sigma)}X(M)/W_{\sigma}(M)=(\bigsqcup_{\ell\in\mathbb{N}}\mathbb{R})\sqcup(\mathbb{R}^{2}/S_2)\sqcup(\mathbb{R}^{2}/S_2)\sqcup\mathbb{R}^{2},$$
and the $K$-theory groups are given by
\begin{displaymath}
K_{j}C^{*}_{r}GL(2,\mathbb{R})\cong{K^{j}}(\mathcal{A}_{2}^{t}(\mathbb{R}))\cong(\bigoplus_{\ell\in\mathbb{N}}K^{j}(\mathbb{R}))\oplus{K^{j}(\mathbb{R}^{2})}=
\left\{ \begin{array}{ll}
 \bigoplus_{\ell\in\mathbb{N}}\mathbb{Z} & ,j=1\\
 \hskip 0.5cm\mathbb{Z}  &  ,j=0.
 \end{array} \right.
\end{displaymath}
\end{example}

\begin{example} For $GL(3,\mathbb{R})$ there are two partitions for $n=3$, to which correspond the
following data

\bigskip

\begin{tabular}{|c|c|c|c|c|c|}
\hline
\textbf{Partition} & $M$ & $^{0}M$ & $W(M)$ & $X(M)$ \\
\hline
2+1 & $GL(2,\mathbb{R})\times\mathbb{R}^\times$ & $SL^{\pm}(2,\mathbb{R})\times(\mathbb{Z}/2\mathbb{Z})$ & $1$ &
$\mathbb{R}^2$\\
1+1+1 & $(\mathbb{R}^\times)^3$ & $(\mathbb{Z}/2\mathbb{Z})^3$ & $S_{3}$ & $\mathbb{R}^3$\\
\hline
\end{tabular}

\bigskip

For the partition $3=2+1$, an element $\sigma\in{E_{2}(^{0}M)}$ is
given by
$$\sigma=i_{G,P}(\mathcal{D}^{+}_{\ell}\otimes\tau)
\textrm{ , }\ell\in\mathbb{N}\textrm{ and
}\tau\in(\widehat{\mathbb{Z}/2\mathbb{Z}}).$$

\bigskip

For the partition
$3=1+1+1$, an element $\sigma\in{E_{2}(^{0}M)}$ is given by
$$\sigma=i_{G,P}(\bigotimes_{i=1}^{3}\tau_{i})
\textrm{ , }\tau_i\in(\widehat{\mathbb{Z}/2\mathbb{Z}}).$$

The tempered dual is parameterized as follows
$$\mathcal{A}_{3}^{t}(\mathbb{R})\cong\bigsqcup_{(M,\sigma)}X(M)/W_{\sigma}(M)=\bigsqcup_{\mathbb{N}\times(\mathbb{Z}/2\mathbb{Z})}(\mathbb{R}^{2}/
1)\bigsqcup_{(\mathbb{Z}/2\mathbb{Z})^{3}}(\mathbb{R}^{3}/S_{3}).$$

The $K$-theory groups are given by
\begin{displaymath}
K_{j}C^{*}_{r}GL(3,\mathbb{R})\cong{K^{j}}(\mathcal{A}_{3}^{t}(\mathbb{R}))\cong\bigoplus_{\mathbb{N}\times(\mathbb{Z}/2\mathbb{Z})}K^{j}(\mathbb{R}^{2})\oplus{0}=
\left\{ \begin{array}{ll}
 \bigoplus_{\mathbb{N}\times(\mathbb{Z}/2\mathbb{Z})}\mathbb{Z} & ,j=0\\
 \hskip 0.5cm 0  &  ,j=1.
 \end{array} \right.
\end{displaymath}
\end{example}

The general case of $GL(n,\mathbb{R})$ will now be considered. It
can be split in two cases: $n$ even and $n$ odd.

\vspace{11pt}

$\bullet$ $n=2q$ even\\
Suppose $n$ is even. For every partition $n=2q+r$, either
$W_{\sigma}(M)= 1$ or $W_{\sigma}(M)\neq 1$. If $W_{\sigma}(M)\neq
1$ then $\mathbb{R}^{n_{M}}/W_{\sigma}(M)$ is a cone and the
$K$-groups $K^{0}$ and $K^{1}$ both vanish.  This happens precisely
when $r>2$ and therefore we have only two partitions, corresponding
to the choices of $r=0$ and $r=2$, which contribute to the
$K$-theory with non-zero $K$-groups

\bigskip

\begin{tabular}{|c|c|c|c|}
\hline
\textbf{Partition} & $M$ & $^{0}M$ & $W(M)$ \\
\hline
$2q$ & $GL(2,\mathbb{R})^q$ & $SL^{\pm}(2,\mathbb{R})^q$ & $S_{q}$ \\
$2(q-1)+2$ & $GL(2,\mathbb{R})^{q-1}\times(\mathbb{R}^\times)^2$ &
$SL^{\pm}(2,\mathbb{R})^{q-1}\times(\mathbb{Z}/2\mathbb{Z})^2$ & $S_{q-1}\times(\mathbb{Z}/2\mathbb{Z})$ \\
\hline
\end{tabular}

\bigskip

We also have $X(M)\cong\mathbb{R}^q$ for $n=2q$, and
$X(M)\cong\mathbb{R}^{q+1}$, for $n=2(q-1)+2$.

For the partition $n=2q$ $(r=0)$, an element
$\sigma\in{E_{2}(^{0}M)}$ is given by
$$\sigma=i_{G,P}(\mathcal{D}^{+}_{\ell_{1}}\otimes{...}\otimes\mathcal{D}^{+}_{\ell_{q}})
\textrm{ , }(\ell_{1},...\ell_{q})\in\mathbb{N}^{q} \textrm{ and
}\ell_{i}\neq\ell_{j} \textrm{ if }i\neq{j}.$$

For the partition $n=2(q-1)+2$ $(r=2)$, an element
$\sigma\in{E_{2}(^{0}M)}$ is given by
$$\sigma=i_{G,P}(\mathcal{D}^{+}_{\ell_{1}}\otimes{...}\otimes\mathcal{D}^{+}_{\ell_{q-1}}\otimes{id}\otimes{sgn})
\textrm{ , }(\ell_{1},...\ell_{q-1})\in\mathbb{N}^{q-1} \textrm{ and }\ell_{i}\neq\ell_{j} \textrm{ if }i\neq{j}.$$
Therefore, the tempered dual has the following form
$$\mathcal{A}_{n}^{t}(\mathbb{R})=\mathcal{A}_{2q}^{t}(\mathbb{R})=(\bigsqcup_{\ell\in\mathbb{N}^{q}}\mathbb{R}^{q})\sqcup(\bigsqcup_{\ell'\in\mathbb{N}^{q-1}}\mathbb{R}^{q+1})\sqcup \mathcal{C}$$
where $\mathcal{C}$ is a disjoint union of closed cones as in Definition \ref{closed cone}.
\begin{thm}\label{K-theory GL(2q)}
Suppose $n=2q$ is even. Then the $K$-groups are
\begin{displaymath}
K_{j}C^{*}_{r}GL(n,\mathbb{R})\cong \left\{ \begin{array}{ll}
 \bigoplus_{\ell\in\mathbb{N}^{q}}\mathbb{Z} & ,j\equiv{q}(mod2)\\
 \bigoplus_{\ell\in\mathbb{N}^{q-1}}\mathbb{Z} &  ,\textrm{otherwise}.
 \end{array} \right.
\end{displaymath}
If $q=1$ then the direct sum
$\bigoplus_{\ell\in\mathbb{N}^{q-1}}\mathbb{Z}$ will denote a
single copy of $\mathbb{Z}$.
\end{thm}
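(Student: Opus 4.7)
The plan is to combine the decomposition of the tempered dual displayed immediately before the theorem with the Morita equivalence $K_j(C_r^*\GL(n,\mathbb{R})) \cong K^j(\mathcal{A}_n^t(\mathbb{R}))$ from \cite[\S1.2]{Pl1} and the vanishing of $K$-theory for closed cones. Concretely, I would apply $K^j$ to
\[
\mathcal{A}_{2q}^t(\mathbb{R}) \;=\; \Bigl(\bigsqcup_{\ell\in\mathbb{N}^q}\mathbb{R}^q\Bigr)\;\sqcup\;\Bigl(\bigsqcup_{\ell'\in\mathbb{N}^{q-1}}\mathbb{R}^{q+1}\Bigr)\;\sqcup\;\mathcal{C},
\]
use additivity of $K^j$ on disjoint unions, and discard the $\mathcal{C}$ summand via Lemma~\ref{lemma1} and its successor. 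This reduces the calculation to
\[
K^j(\mathcal{A}_{2q}^t(\mathbb{R}))\;\cong\;\bigoplus_{\mathbb{N}^q}K^j(\mathbb{R}^q)\;\oplus\;\bigoplus_{\mathbb{N}^{q-1}}K^j(\mathbb{R}^{q+1}).
\]

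Next I would invoke the formula $K^j(\mathbb{R}^n)=\mathbb{Z}$ when $n\equiv j\pmod 2$ and $0$ otherwise, recalled in Section~3.1. Because $q$ and $q+1$ have opposite parities, exactly one of the two families of summands survives for any given $j$: when $j\equiv q\pmod 2$ the first summand yields $\bigoplus_{\mathbb{N}^q}\mathbb{Z}$ and the second vanishes; when $j\not\equiv q\pmod 2$ the roles reverse and one obtains $\bigoplus_{\mathbb{N}^{q-1}}\mathbb{Z}$. This is exactly the claim. The edge case $q=1$ causes no trouble: the partition $n=2(q-1)+2=0+2$ then contains no $\GL(2,\mathbb{R})$ block, so $\ell'$ ranges over the singleton $\mathbb{N}^0$ and the direct sum contains a single copy of $\mathbb{Z}$, consistent with the convention stated in the theorem.

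The main obstacle, if any, lies in the preceding structural claim rather than in the proof of the theorem proper: namely, that the partitions $n=2q+r$ with $r>2$ all give closed cones. This holds because in those cases at least three of the $r$ characters $\tau_j\in\{id,sgn\}$ are present, so by pigeonhole two of them coincide, forcing a non-trivial $S_2\subseteq W_\sigma(M)$ factor to which the closed-cone lemmas apply; granted this, the rest of the proof is a bookkeeping exercise in Bott periodicity.
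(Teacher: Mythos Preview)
Your proposal is correct and follows exactly the (implicit) argument of the paper: the theorem has no separate proof in the text, but is an immediate consequence of the displayed decomposition of $\mathcal{A}_{2q}^t(\mathbb{R})$ together with the Morita equivalence~(\ref{K-theory GL(n,R)}), the vanishing lemmas for closed cones, and Bott periodicity for $K^j(\mathbb{R}^n)$. Your final paragraph correctly isolates the only substantive point (why $r>2$ forces $W_\sigma(M)\neq 1$) and gives the same pigeonhole reason the paper relies on.
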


$\bullet$ $n=2q+1$ odd\\
If $n$ is odd only one partition contributes to the $K$-theory of
$GL(n,\mathbb{R})$ with non-zero $K$-groups:\\

\bigskip

\begin{tabular}{|c|c|c|c|c|}
\hline
\textbf{Partition} & $M$ & $^{0}M$ & $W(M)$ & $X(M)$\\
\hline
$2q+1$ & $GL(2,\mathbb{R})^{q+1}\times\mathbb{R}^\times$ & $SL^{\pm}(2,\mathbb{R})^{q}\times(\mathbb{Z}/2\mathbb{Z})$ &
$S_q$ & $\mathbb{R}^{q+1}$\\
\hline
\end{tabular}

\bigskip

An element $\sigma\in{E_{2}(^{0}M)}$ is given by
$$\sigma=i_{G,P}(\mathcal{D}^{+}_{\ell_{1}}\otimes{...}\otimes\mathcal{D}^{+}_{\ell_{q}}\otimes\tau)
\textrm{ ,
}(\ell_{1},...\ell_{q},\tau)\in\mathbb{N}^{q}\times(\mathbb{Z}/2\mathbb{Z})
\textrm{ and }\ell_{i}\neq\ell_{j} \textrm{ if }i\neq{j}.$$

The tempered dual is given by
$$\mathcal{A}_{n}^{t}(\mathbb{R})=\mathcal{A}_{2q+1}^{t}(\mathbb{R})=(\bigsqcup_{\ell\in(\mathbb{N}^{q}\times(\mathbb{Z}/2\mathbb{Z}))}\mathbb{R}^{q+1})\sqcup \mathcal{C}$$
where $\mathcal{C}$ is a disjoint union of closed cones as in Definition \ref{closed cone}.
\begin{thm}\label{K-theory GL(2q+1)}
Suppose $n=2q+1$ is odd. Then the $K$-groups are
\begin{displaymath}
K_{j}C^{*}_{r}GL(n,\mathbb{R})\cong \left\{ \begin{array}{ll}
 \bigoplus_{\ell\in\mathbb{N}^{q}\times(\mathbb{Z}/2\mathbb{Z})}\mathbb{Z} & ,j\equiv{q+1}(mod2)\\
 \hskip 1.5cm 0 &  ,\textrm{otherwise}.
 \end{array} \right.
\end{displaymath}
Here, we use the following convention: if $q=0$ then the direct sum
is
$\bigoplus_{\mathbb{Z}/2\mathbb{Z}}\mathbb{Z}\cong\mathbb{Z}\oplus\mathbb{Z}$.
\end{thm}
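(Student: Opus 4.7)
The plan is to mirror the even-dimensional argument of Theorem \ref{K-theory GL(2q)}. Since $n=2q+1$ is odd, every partition $n=2q'+r$ has $r$ odd, so the possible values of $r$ are $1,3,5,\dots,2q+1$. I will enumerate these, identify the isotropy $W_\sigma(M)$ in each case, and show that only $r=1$ contributes non-vanishing $K$-theory.

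For the partition with $r=1$, the Levi is $M\cong GL(2,\mathbb{R})^q\times \mathbb{R}^\times$ with $W(M)=S_q$ acting by permuting the $q$ copies of $GL(2,\mathbb{R})$ (the singleton factor $\mathbb{R}^\times$ carries no further symmetry). Picking $\sigma=\mathcal{D}^+_{\ell_1}\otimes\cdots\otimes\mathcal{D}^+_{\ell_q}\otimes\tau$ with pairwise distinct $\ell_i$, the isotropy is trivial, and $X(M)/W_\sigma(M)\cong \mathbb{R}^{q+1}$. Varying $(\ell_1,\dots,\ell_q,\tau)\in\mathbb{N}^q\times(\mathbb{Z}/2\mathbb{Z})$ (and absorbing the other choices of $\ell_i$ into orbit representatives) gives the open stratum $\bigsqcup_{\mathbb{N}^q\times(\mathbb{Z}/2\mathbb{Z})}\mathbb{R}^{q+1}$ of the tempered dual.

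For any partition $n=2q'+r$ with $r\geq 3$, the Weyl group contains a factor $S_r$ permuting the $r$ copies of $\mathbb{R}^\times$. Because $SL^\pm(1,\mathbb{R})\cong\mathbb{Z}/2\mathbb{Z}$ has only the two characters $id$ and $sgn$, any choice $(\tau_1,\dots,\tau_r)$ with $r\geq 3$ must repeat a value, so the stabilizer $W_\sigma(M)$ contains a nontrivial symmetric subgroup $S_a\times S_b$ with $a+b=r$ and $\max(a,b)\geq 2$. Consequently the corresponding orbit space $\mathbb{R}^{n_M}/W_\sigma(M)$ splits as a product containing a closed cone in the sense of Definition \ref{closed cone}, and the preceding two lemmas together with the Künneth formula force both $K^0$ and $K^1$ to vanish on this piece. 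All such strata can therefore be lumped into the closed-cone part $\mathcal{C}$.

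Assembling these contributions gives the decomposition $\mathcal{A}^{t}_{2q+1}(\mathbb{R})\cong \bigsqcup_{\mathbb{N}^q\times(\mathbb{Z}/2\mathbb{Z})}\mathbb{R}^{q+1}\sqcup\mathcal{C}$ already displayed before the theorem. Substituting into (\ref{K-theory GL(n,R)}) and using $K^j(\mathbb{R}^{q+1})=\mathbb{Z}$ when $j\equiv q+1\pmod 2$ and $0$ otherwise yields the stated formula. The convention for $q=0$ is consistent: $\mathbb{N}^0$ is the empty product (a single point), so the index set reduces to $\mathbb{Z}/2\mathbb{Z}$ and the formula recovers $K_1C^{*}_{r}GL(1,\mathbb{R})\cong\mathbb{Z}\oplus\mathbb{Z}$ of Example \ref{K-theory GL(1,R)}. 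The one genuinely nontrivial step is the isotropy computation for $r\geq 3$; once that is in place the rest is Morita equivalence plus the vanishing lemmas for closed cones.
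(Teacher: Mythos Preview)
Your argument is correct and follows the same route as the paper: both identify the partition $2q+1$ (i.e.\ $r=1$) with pairwise distinct $\ell_i$ as the unique source of non-vanishing $K$-theory, relegate all other strata to the closed-cone part $\mathcal{C}$, and read off the answer from $K^j(\mathbb{R}^{q+1})$. Your pigeonhole justification for $r\geq 3$ is in fact more explicit than the paper's, which simply asserts that only the $r=1$ partition contributes; the one case you leave implicit (repeated $\ell_i$'s when $r=1$) also lands in $\mathcal{C}$ by the same cone lemmas, exactly as in the even discussion.
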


\bigskip

We conclude that the $K$-theory of $C^{*}_{r}GL(n,\mathbb{R})$
depends on essentially one parameter $q$ given by the maximum number
of $2's$ in the partitions of $n$ into $1's$ and $2's$. If $n$ is
even then $q=\frac{n}{2}$ and if $n$ is odd then $q=\frac{n-1}{2}$.

\subsection{$K$-theory for $\GL(n,\C)$}

Let $T^{\circ}$ be the maximal compact subgroup of the maximal
compact torus $T$ of $\GL(n,\C)$. Let $\sigma$ be a unitary
character of $T^{\circ}$. We note that $W = W(T)$, \;$W_{\sigma} =
W_{\sigma}(T)$. If
$W_{\sigma} = 1$ then we say that the orbit $W \cdot \sigma$ is
\emph{generic}.

\begin{thm}\label{K-theory GL(n,C)}
The $K$-theory of $C_{r}^{*}\GL(n,\mathbb{C})$ admits the
following description.  If $n = j \mod 2$ then $K_j$ is free
abelian on countably many generators, one for each generic $W$-
orbit in the unitary dual of $T^{\circ}$, and $K_{j+1} = 0$.
\end{thm}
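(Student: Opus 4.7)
The plan is to reproduce for $\GL(n,\C)$ the same decomposition strategy that was used for $\GL(n,\R)$: parametrize the tempered dual, identify the isotropy subgroups as Young subgroups of $S_n$, invoke the closed-cone vanishing lemma for non-generic orbits, and count the surviving generic contributions.

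First I would apply the strong Morita equivalence between $C_r^*\GL(n,\C)$ and $C_0(\mathcal{A}_n^t(\C))$ together with the Harish--Chandra parametrization recalled in Section 2. Since the only standard Levi supporting discrete series over $\C$ is the maximal torus $T\cong(\C^\times)^n$, with $^0T\cong\T^n$, we obtain
\[
K_j(C_r^*\GL(n,\C))\;=\;\bigoplus_{\sigma}K^j\bigl(X(T)/W_\sigma(T)\bigr),
\]
where $\sigma$ ranges over a system of representatives for the $W(T)=S_n$ orbits on $\widehat{T^\circ}$, and by Proposition \ref{X(M),C} we have $X(T)\cong\R^n$.

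Next I would determine the possible isotropy groups. The action of $W=S_n$ on $\widehat{T^\circ}\cong\Z^n$ is the permutation action, so the stabilizer $W_\sigma$ of a character $\sigma=(\sigma_1,\dots,\sigma_n)$ is exactly the Young subgroup $S_{n_1}\times\cdots\times S_{n_k}$ determined by the partition of indices into blocks on which $\sigma_i$ takes a common value. Hence either $W_\sigma=1$ (the generic case), or $W_\sigma$ is a nontrivial product of symmetric groups acting on $\R^n$ precisely in the manner considered in the preceding lemma about closed cones. Invoking that lemma gives $K^j(\R^n/W_\sigma)=0$ for both $j=0,1$ whenever $W_\sigma\neq 1$, while for $W_\sigma=1$ the usual computation yields $K^j(\R^n)=\Z$ if $n\equiv j\pmod 2$ and $0$ otherwise.

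To finish, I would observe that each generic orbit contributes a single copy of $\Z$ in degree $j\equiv n\pmod 2$ and nothing in the opposite parity, so $K_j$ is free abelian on the set of generic $W$-orbits and $K_{j+1}=0$. The set of generic orbits is a subset of $\Z^n/S_n$, hence countable (and manifestly infinite). I do not expect a serious obstacle here: the only nontrivial input is the structural observation that isotropy groups are Young subgroups, so that the previously proved vanishing for closed cones applies uniformly. The remainder is an application of the Künneth-type computation already packaged in the lemmas of this section.
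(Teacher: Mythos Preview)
Your proposal is correct and follows essentially the same approach as the paper: strong Morita equivalence, the Harish--Chandra parametrization of $\mathcal{A}_n^t(\C)$ as $\bigsqcup X(T)/W_\sigma(T)$, and then the closed-cone vanishing lemma for non-generic isotropy. The paper's proof is terser---it simply cites the Morita equivalence from \cite{PP}, the Plancherel decomposition, and then says the result follows from the vanishing lemma---whereas you have made explicit the identification of $W_\sigma$ as a Young subgroup, which is exactly the missing detail needed to invoke that lemma.
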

\begin{proof} We exploit the strong Morita equivalence described
in \cite[Prop. 4.1]{PP}. We have a homeomorphism of locally
compact Hausdorff spaces:
\[\mathcal{A}_{n}^{t}(\C) \cong \bigsqcup X(T)/W_{\sigma}(T)\]
by the Harish-Chandra Plancherel Theorem for complex reductive
groups \cite{HC}, and the identification of the Jacobson topology
on the left-hand-side with the natural topology on the
right-hand-side, as in \cite{PP} . The result now follows from
Lemma 4.3.
\end{proof}

\section{Langlands parameters for $GL(n)$}

The Weil group of $\mathbb{C}$ is simply
\[
W_{\mathbb{C}}\cong \mathbb{C}^{\times},
\]
and the Weil group of $\mathbb{R}$ can be written as disjoint union
\[
W_{\mathbb{R}}\cong \CC \sqcup j\CC,
\]
where $j^2=-1$ and $jcj^{-1}=\overline{c}$ ($\overline{c}$ denotes
complex conjugation). We shall use this disjoint union to describe
the representation theory of $W_{\mathbb{R}}$.

\begin{defn}
An $L$- parameter is a continuous homomorphism
\[
\phi: W_F\rightarrow GL(n,\C)
\]
such that $\phi(w)$ is semisimple for all $w\in W_F$.
\end{defn}

$L$-parameters are also called Langlands parameters. Two $L$-parameters are equivalent if they are cojugate under $GL(n,\C)$.
The set of equivalence classes of $L$-parameters is denoted by $\mathcal{G}_n$. And the set of equivalence classes of $L$-parameters whose image is bounded is denoted by $\mathcal{G}^t_n$.

\medskip

Let $F$ be either $\R$ or $\C$. Let $\mathcal{A}_n(F)$ (resp. $\mathcal{A}^t_n(F)$) denote the smooth dual (resp. tempered dual) of $GL(n,F)$. The local Langlands correspondence is a bijection
$$\mathcal{G}_n(F)\rightarrow\mathcal{A}_n(F).$$
In particular,
$$\mathcal{G}^t_n(F)\rightarrow\mathcal{A}^t_n(F)$$
is also a bijection.

We are only interested in $L$-parameters whose image is bounded. In the sequel we will refer to them, for simplicity, as $L$-parameters.

\bigskip

\textbf{$L$-parameters for $W_{\C}$}

\bigskip

A $1$-dimensional $L$-parameter for $W_{\mathbb{C}}$ is simply a character of $\mathbb{C}^{\times}$ (i.e. a unitary quasicharacter):
\[
\chi(z)=(\frac{z}{|z|})^{\ell}\otimes|z|^{it}
\]
where $|z|=|z|_{\mathbb{C}}=z\overline{z}$, $\ell\in\mathbb{Z}$ and $t\in\mathbb{R}$. To emphasize the dependence on parameters
$(\ell,t)$ we write sometimes $\chi=\chi_{\ell,t}$.

An $n$-dimensional $L$-parameter can be written as a direct sum
of $n$ $1$-dimensional characters of $\mathbb{C}^{\times}$:
\[
\phi=\phi_1\oplus...\oplus\phi_n,
\]
with $\phi_k(z)=(\frac{z}{|z|})^{\ell_k}\otimes|z|^{t_k}, \ell_k\in\mathbb{Z}, t_k\in\mathbb{R}, k=1,...,n.$

\bigskip

\textbf{$L$-parameters for $W_{\R}$}

\bigskip

The $1$-dimensional $L$-parameters for $W_{\mathbb{R}}$ are as follows

\begin{displaymath}
 \left\{ \begin{array}{ll}
 \phi_{\varepsilon,t}(z)=|z|_{\mathbb{R}}^{it}\\
 \\
 \phi_{\varepsilon,t}(j)=(-1)^{\varepsilon}
\end{array} \right.
\textrm{ , }\varepsilon\in\{0,1\}, t\in\mathbb{R}.
\end{displaymath}

We may now describe the local Langlands correspondence for $GL(1,\mathbb{R})$:

\[
\phi_{0,t}\mapsto 1\otimes|.|^{it}_{\mathbb{R}}
\]
\[
\hskip 0.5cm\phi_{1,t}\mapsto sgn\otimes|.|^{it}_{\mathbb{R}}
\]

\medskip

Now, we consider $2$-dimensional $L$-parameters for $W_{\mathbb{R}}$:

\smallskip

\begin{displaymath}
\phi_{\ell,t}(z)=
 \left( \begin{array}{cccc}
 \chi_{\ell,t}(z) & 0  \\
 0 & \overline{\chi}_{\ell,t}(z)
\end{array} \right)
\textrm{ , }\phi_{\ell,t}(j)=
 \left( \begin{array}{cccc}
 0 & (-1)^{\ell}  \\

 1 & 0
\end{array} \right).
\end{displaymath}
with $\ell\in\mathbb{Z}$ and $t\in\mathbb{R}$.

\smallskip

and

\smallskip

\begin{displaymath}
\phi_{m,t,n,s}(z)=
 \left( \begin{array}{cccc}
 \chi_{0,t}(z) & 0  \\

 0 & \chi_{0,s}(z)
\end{array} \right)
\textrm{ , }\phi_{m,t,n,s}(j)=
 \left( \begin{array}{cccc}
(-1)^{m} & 0  \\

 0 & (-1)^{n}
\end{array} \right).
\end{displaymath}
with $m,n\in\{0,1\}$ and $t,s\in\mathbb{R}$.

\bigskip

The local Langlands correspondence for $GL(2,\mathbb{R})$ may be descibed as follows.

\medskip

The $L$-parameter $\phi_{m_1,t_1,m_2,t_2}$ corresponds, via Langlands correspondence, to the unitary principal series:
\[
\phi_{m_1,t_1,m_2,t_2}\mapsto \pi(\mu_1,\mu_2),
\]
where $\mu_i$ is the character of $\mathbb{R}^{\times}$ given by
\[
\mu_i(x)=(\frac{x}{|x|})^{m_i}|x|^{it},  m_i\in\{0,1\}, t_i\in\mathbb{R}.
\]
The $L$-parameter $\phi_{\ell,t}$ corresponds, via the Langlands correspondence, to the discrete series:
\[
\hskip 0.5cm\phi_{\ell,t}\mapsto D_{\ell}\otimes|det(.)|^{it}_{\mathbb{R}} \text{ , } \text{ with } \text{ }\ell\in\mathbb{N}, t\in\mathbb{R}.
\]

\medskip

\begin{proposition}\label{eq. L-parameters}
\begin{itemize}
\item[(i)] $\phi_{\ell,t}\cong\phi_{-\ell,t}$;
\item[(ii)] $\phi_{\ell,t,m,s}\cong\phi_{m,s,\ell,t}$;
\item[(iii)] $\phi_{0,t}\cong\phi_{1,t,0,t}\cong\phi_{0,t,1,t}$;
\end{itemize}
\end{proposition}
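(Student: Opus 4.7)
The plan is to exhibit, for each of the three asserted equivalences, an explicit element $g\in\GL(2,\C)$ realising the conjugacy, and to verify $g\phi(w)g^{-1}=\phi'(w)$ on the two generator-types $w=z\in\C^{\times}$ and $w=j$ of $W_{\R}$. Because the defining relations $j^{2}=-1$ and $jzj^{-1}=\overline{z}$ are automatically preserved by any conjugation, this check is sufficient.

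Part (ii) is the easiest: take $g$ to be the standard permutation matrix $\left(\begin{smallmatrix}0&1\\1&0\end{smallmatrix}\right)$. Conjugation by $g$ swaps the two diagonal entries of any diagonal matrix, and applied simultaneously to $\phi_{\ell,t,m,s}(z)=\diag(\chi_{0,t},\chi_{0,s})$ and $\phi_{\ell,t,m,s}(j)=\diag((-1)^{\ell},(-1)^{m})$ this is exactly the exchange of the two direct summands needed to produce $\phi_{m,s,\ell,t}$.

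Part (i) still requires swapping the diagonal of $\phi_{\ell,t}(z)$, but now the off-diagonal factor $(-1)^{\ell}$ in $\phi_{\ell,t}(j)$ must be absorbed. I would therefore take the anti-diagonal matrix $g=\left(\begin{smallmatrix}0&a\\1&0\end{smallmatrix}\right)$ and solve for $a$; a short calculation on the $j$-part forces $a=(-1)^{\ell}$. The key preliminary identity on the $z$-part is $\overline{\chi}_{\ell,t}(z):=\chi_{\ell,t}(\overline{z})=\chi_{-\ell,t}(z)$, which is immediate from $\chi_{\ell,t}(z)=(z/|z|)^{\ell}|z|^{it}$ together with $\overline{z}/|\overline{z}|=(z/|z|)^{-1}$. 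With this identification, swapping the diagonal of $\phi_{\ell,t}(z)=\diag(\chi_{\ell,t},\chi_{-\ell,t})$ reproduces $\phi_{-\ell,t}(z)$ on the nose.

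For part (iii), observe that when $\ell=0$ the matrix $\phi_{0,t}(z)=\chi_{0,t}(z)\cdot\Id$ is scalar and therefore commutes with every element of $\GL(2,\C)$; only the $j$-component carries any content. The problem thus reduces to diagonalising the involution $\phi_{0,t}(j)=\left(\begin{smallmatrix}0&1\\1&0\end{smallmatrix}\right)$, whose eigenvalues are $\pm 1$. Conjugating by the matrix of eigenvectors $g=\left(\begin{smallmatrix}1&1\\-1&1\end{smallmatrix}\right)$ yields $\diag(-1,1)=\phi_{1,t,0,t}(j)$, and then composing with the flip from part (ii) yields $\diag(1,-1)=\phi_{0,t,1,t}(j)$. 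The main thing to watch is the sign bookkeeping in (i); once the convention $\overline{\chi}_{\ell,t}=\chi_{-\ell,t}$ is recorded, all three statements collapse to routine $2\times 2$ linear-algebra verifications.
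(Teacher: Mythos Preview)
Your argument is correct and is precisely the kind of explicit $2\times 2$ conjugation the paper has in mind; the paper itself offers no details beyond the single sentence ``The proof is elementary.'' One cosmetic remark: with the convention $g\phi g^{-1}$ and your chosen $g=\left(\begin{smallmatrix}1&1\\-1&1\end{smallmatrix}\right)$ in part~(iii), the result is actually $\diag(1,-1)$ rather than $\diag(-1,1)$, but since you immediately compose with the flip from~(ii) anyway, both target parameters are reached and the equivalence stands.
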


The proof is elementary. We now quote the following result.

\begin{lem}\cite{K}\label{irred. rep. W_R}
Every finite-dimensional semi-simple representation $\phi$ of
$W_{\mathbb{R}}$ is fully reducible, and each irreducible
representation has dimension one or two.
\end{lem}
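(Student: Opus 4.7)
The approach is Clifford theory applied to the normal abelian subgroup $W_{\mathbb{C}}=\mathbb{C}^{\times}$ of index two in $W_{\mathbb{R}}$. The plan is to decompose $\phi|_{W_{\mathbb{C}}}$ into characters, understand how $j$ permutes those characters, and then build a $W_{\mathbb{R}}$-stable direct sum decomposition of the representation space $V$ whose summands are irreducible of dimension $1$ or $2$. First I would restrict $\phi$ to $W_{\mathbb{C}}$: because $\phi(c)$ is semisimple for every $c\in\mathbb{C}^{\times}$ by hypothesis and all these operators commute, they are simultaneously diagonalizable, so $\phi|_{W_{\mathbb{C}}}$ splits as a sum of characters, giving an isotypic decomposition $V=\bigoplus_{\chi} V_{\chi}$.

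Next I would examine the action of $j$. The relation $jcj^{-1}=\overline{c}$ shows that $\phi(j)$ maps $V_\chi$ into $V_{\chi^j}$, where $\chi^j(c):=\chi(\overline{c})$. Hence $\phi(j)$ permutes the isotypic components via the order-two involution $\chi\mapsto\chi^j$, producing orbits of length $1$ or $2$. The corresponding subspaces $V_\chi$ (for a fixed character with $\chi=\chi^j$) and $V_\chi\oplus V_{\chi^j}$ (for a length-two orbit) are $W_{\mathbb{R}}$-stable, and $V$ is their internal direct sum.

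In a length-two orbit the stable subspace $V_\chi\oplus V_{\chi^j}$ is a sum of copies of the $2$-dimensional induced representation $\Ind_{W_{\mathbb{C}}}^{W_{\mathbb{R}}}(\chi)$, which is irreducible because $\chi\neq\chi^j$; picking a basis of $V_\chi$ together with its image under $\phi(j)$ yields the decomposition explicitly. In a length-one orbit the relation $\phi(j)^2=\phi(-1)=\chi(-1)\,\Id$ on $V_\chi$, combined with the semisimplicity of the operator $\phi(j)$ on all of $V$, allows one to diagonalize $\phi(j)|_{V_\chi}$ and split $V_\chi$ into $1$-dimensional $W_{\mathbb{R}}$-subrepresentations on which $j$ acts by a scalar square root of $\chi(-1)$.

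The main point to get right is precisely this last step: we must know that $\phi(j)$ acts semisimply on the invariant summand $V_\chi$ in a length-one orbit. This is exactly where the hypothesis that $\phi(w)$ is semisimple for \emph{every} $w\in W_{\mathbb{R}}$ — and not merely for $w\in W_{\mathbb{C}}$ — is used, via the observation that the restriction of a semisimple operator to an invariant subspace remains semisimple. Patching the orbit-wise decompositions then yields simultaneously both the complete reducibility of $\phi$ and the dimension bound on its irreducible constituents.
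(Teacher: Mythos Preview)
Your proof is correct and complete; the Clifford-theory argument via the index-two normal abelian subgroup $W_{\mathbb{C}}$ is the standard route, and you have handled the one genuinely delicate point (semisimplicity of $\phi(j)|_{V_\chi}$ on a length-one orbit) properly by invoking the hypothesis that $\phi(w)$ is semisimple for every $w$, not just for $w\in W_{\mathbb{C}}$.

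That said, the paper does not actually prove this lemma: it is stated with a citation to Knapp~\cite{K} and no argument is given. So there is nothing to compare your proof against within the paper itself; you have supplied a self-contained proof where the authors chose to quote the result. Your argument is essentially the one that appears in standard references (including Knapp), so in that sense it matches the intended source.
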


\section{Base change}


We may state the base change problem for archimedean fields in the following way. Consider the archimedean
base change $\mathbb{C}/\mathbb{R}$. We have $W_{\mathbb{C}}\subset{W_\mathbb{R}}$ and there is a natural map
\begin{equation}\label{restriction archimedean}
Res^{W_\mathbb{R}}_{W_\mathbb{C}}:\mathcal{G}_{n}(\mathbb{R})\longrightarrow\mathcal{G}_{n}(\mathbb{C})
\end{equation}
called \textit{restriction}. By the local Langlands correspondence
for archimedean fields \cite[Theorem 3.1, p.236]{BS}\cite{K}, there
is a base change map
\begin{equation}
\mathcal{BC}:\mathcal{A}_{n}(\mathbb{R})\longrightarrow\mathcal{A}_{n}(\mathbb{C})
\end{equation}
such that the following diagram commutes
\[
\xymatrix{\mathcal{A}_{n}(\mathbb{R})\ar[r]^{\mathcal{BC}}
 & \mathcal{A}_{n}(\mathbb{C})  \\
\mathcal{G}_{n}(\mathbb{R})\ar[r]_{Res^{W_\mathbb{R}}_{W_\mathbb{C}}}\ar[u]^{_{\mathbb{R}}\mathcal{L}_{n}}
& \mathcal{G}_{n}(\mathbb{C})\ar[u]_{_{\mathbb{C}}\mathcal{L}_{n}} }
\]

Arthur and Clozel's book \cite{AC} gives a full treatment of base
change for $GL(n)$. The case of archimedean base change can be
captured in an elegant formula \cite[p. 71]{AC}. We briefly review
these results.

Given a partition $n=2q+r$ let $\chi_{i}$ ($i=1,...,q$) be a
ramified character of $\mathbb{C}^{\times}$ and let $\xi_{j}$
($j=1,...,r$) be a ramified character of $\mathbb{R}^{\times}$.
Since the $\chi_{i}$'s are ramified,
$\chi_{i}(z)\neq\chi_{i}^{\tau}(z)=\chi_{i}(\overline{z})$, where $\tau$ is a generator of $Gal(\C/\R)$. By
Langlands classification \cite{K}, each $\chi_{i}$ defines a
discrete series representation $\pi{(\chi_{i})}$ of
$GL(2,\mathbb{R})$, with
$\pi{(\chi_{i})}={\pi{(\chi_{i}^{\tau})}}$. Denote by
$\pi{(\chi_{1},...,\chi_{q},\xi_{1},...,\xi_{r})}$ the
\textit{generalized principal series representation} of
$GL(n,\mathbb{R})$
\begin{equation}\label{generalized principal series rep. GL(n,R)}
\pi{(\chi_{1},...,\chi_{q},\xi_{1},...,\xi_{r})}=i_{GL(n,\mathbb{R}),MN}(\pi{(\chi_{1})}\otimes{...}\otimes\pi{(\chi_{q})}\otimes\xi_{1}\otimes{...}\otimes\xi_{r}\otimes
1).
\end{equation}
The base change map for the general principal series representation
is given by induction from the Borel subgroup $B(\mathbb{C})$
\cite[p. 71]{AC}:
\begin{equation}\label{archimedean base change map}
\mathcal{BC}(\pi)=\Pi{(\chi_{1},...,\chi_{q},\xi_{1},...,\xi_{r})}=i_{GL(n,\mathbb{C}),B(\mathbb{C})}(\chi_{1},\chi_{1}^{\tau},...,\chi_{q},\chi_{q}^{\tau},\xi_{1}\circ{N},...,\xi_{r}\circ{N}),
\end{equation}
where
$N=N_{\mathbb{C}/\mathbb{R}}:\mathbb{C}^{\times}\longrightarrow\mathbb{R}^{\times}$
is the norm map defined by $z\mapsto{z\overline{z}}$.

We illustrate the base change map with two simple examples.
\begin{example}\label{example base change GL(1)}
For $n=1$, base change is simply composition with the norm map
$$\mathcal{BC}:\mathcal{A}_{1}^{t}(\mathbb{R})\rightarrow\mathcal{A}_{1}^{t}(\mathbb{C}) \textrm{ , } \mathcal{BC}(\chi)=\chi\circ{N}.$$
\end{example}

\begin{example}\label{example base change GL(2)}
For $n=2$, there are two different kinds of representations, one for
each partition of $2$. According to (\ref{generalized principal
series rep. GL(n,R)}), $\pi{(\chi)}$ corresponds to the partition
$2=2+0$ and $\pi{(\xi_{1},\xi_{2})}$ corresponds to the partition
$2=1+1$. Then the base change map is given, respectively, by
$$\mathcal{BC}(\pi{(\chi)})=i_{GL(2,\mathbb{C}),B(\mathbb{C})}(\chi,\chi^{\tau}),$$
and
$$\mathcal{BC}(\pi{(\xi_{1},\xi_{2})})=i_{GL(2,\mathbb{C}),B(\mathbb{C})}(\xi_{1}\circ{N},\xi_{2}\circ{N}).$$
\end{example}

\subsection{The base change map}

Now, we define base change as a map of topological spaces and study the induced $K$-theory map.

\begin{proposition}\label{BC proper map}
The base change map
$\mathcal{BC}:\mathcal{A}^{t}_{n}(\mathbb{R})\rightarrow\mathcal{A}^{t}_{n}(\mathbb{C})$
is a continuous proper map.
\end{proposition}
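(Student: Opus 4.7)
My plan is to work component by component, using the explicit parametrizations of $\mathcal{A}^t_n(\R)$ and $\mathcal{A}^t_n(\C)$ given in Propositions~\ref{X(M),R} and~\ref{X(M),C} together with the base change formula~\eqref{archimedean base change map}. First I fix a source component $X(M)/W_\sigma(M)$ attached to a partition $n=2q+r$, with distinct discrete series indices $\ell_1<\cdots<\ell_q$ on the $GL(2,\R)$-blocks and a tuple of $GL(1,\R)$-characters with $r_0$ trivial and $r_1=r-r_0$ equal to $\mathrm{sgn}$; by Proposition~\ref{X(M),R} this component is $\mathbb{R}^{q+r}/(S_{r_0}\times S_{r_1})$. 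Using~\eqref{archimedean base change map} together with the observation that $\xi_j\circ N$ is always an \emph{unramified} character of $\mathbb{C}^\times$ (since $N(z)=z\bar z>0$ kills the sign), base change on this component is induced by an injective affine-linear lift
\begin{equation*}
\mathbb{R}^{q+r}\longrightarrow\mathbb{R}^n,\qquad (t_1,\ldots,t_q,s_1,\ldots,s_r)\longmapsto(t_1,t_1,\ldots,t_q,t_q,cs_1,\ldots,cs_r)
\end{equation*}
(with a nonzero constant $c$ depending on the normalization of $|\cdot|_{\mathbb{C}}$), landing in the target component with discrete $T^\circ$-data $(\ell_1,-\ell_1,\ldots,\ell_q,-\ell_q,0,\ldots,0)$. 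Since the $\ell_i$ are distinct and positive, the $W(T)$-isotropy of this character is $S_r$ (permuting the $r$ zeros), so the target component is $\mathbb{R}^n/S_r$ by Proposition~\ref{X(M),C}.

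Continuity then follows at once: the lift is polynomial and equivariant under the obvious inclusion $S_{r_0}\times S_{r_1}\hookrightarrow S_r$, and the Harish-Chandra parameter spaces carry the disjoint-union topology, so continuity on each component yields continuity of $\mathcal{BC}$. For properness, I first handle it per component: identifying the source with $\mathbb{R}^q\times(\mathbb{R}^{r_0}/S_{r_0})\times(\mathbb{R}^{r_1}/S_{r_1})$ and the target with $\mathbb{R}^{2q}\times(\mathbb{R}^r/S_r)$, the map factors as the proper injection $(t_i)\mapsto(t_i,t_i)$ on the $\mathbb{R}^q$-factor times a ``sorted concatenation'' map between closed Weyl chambers, which sends bounded sets into bounded sets and hence is proper. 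To upgrade to global properness, I observe that any compact $K\subset\mathcal{A}^t_n(\C)$ meets only finitely many target components, and any fixed target component of the form described above receives the image of at most $r+1$ source components (one for each choice of $r_0\in\{0,1,\ldots,r\}$, the remaining discrete data $q$ and $\{\ell_i\}$ being forced by the image). Hence $\mathcal{BC}^{-1}(K)$ is a finite union of compact sets, and so compact.

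The main obstacle I anticipate is purely the bookkeeping: correctly matching source and target components, verifying that the $r_0$-indexed source components all map into the \emph{same} target component (rather than various permutations of the zero block), and confirming that the Harish-Chandra topology on both $\mathcal{A}^t_n(\R)$ and $\mathcal{A}^t_n(\C)$ is genuinely the disjoint-union topology on the displayed decompositions so that compact sets meet only finitely many components. Once this combinatorial and topological picture is pinned down, continuity and properness reduce to elementary facts about injective affine maps and symmetric-group quotients.
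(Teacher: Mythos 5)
Your proof is correct and follows the same component-by-component strategy as the paper's: fix a source component $X(M)/W_\sigma(M)$, write the base change lift explicitly as an affine map $\mathbb{R}^{q+r}\to\mathbb{R}^{n}$ coming from the formula $\mathcal{BC}(\pi)=i_{GL(n,\mathbb{C}),B(\mathbb{C})}(\chi_1,\chi_1^\tau,\dots,\chi_q,\chi_q^\tau,\xi_1\circ N,\dots,\xi_r\circ N)$, and check equivariance with respect to the Weyl stabilizers so the map descends to the quotients. Where you go further than the paper is in passing from per-component properness to global properness: the paper's argument for $n>1$ stops after establishing that each restriction $\mathcal{BC}\colon X(M)/W_\sigma(M)\to Y(T)/W_{\sigma'}(T)$ is proper, and does not record why this suffices for the map on the whole disjoint union. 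Your observation that any fixed target component receives the image of at most $r+1$ source components --- the data $q$ and the multiset $\{\ell_i\}$ being forced by the target's discrete parameters $(\ell_1,-\ell_1,\dots,\ell_q,-\ell_q,0,\dots,0)$, with only $r_0\in\{0,\dots,r\}$ free --- is exactly the finiteness needed to conclude that $\mathcal{BC}^{-1}(K)$ is a finite union of compacts, and it tightens the argument as written. The remaining points you flagged as bookkeeping (the target stabilizer is $S_r$ by distinctness of the $\ell_i$; the Harish-Chandra parameter space carries the disjoint-union topology so a compact set meets finitely many components) are all correct as you anticipated and are consistent with what the paper uses implicitly in its $n=1$ computation.
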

\begin{proof}

First, we consider the case $n=1$. As we have seen in Example
\ref{example base change GL(1)}, base change for $\GL(1)$ is the
map given by $\mathcal{BC}(\chi)=\chi\circ{N}$, for all characters
$\chi\in\mathcal{A}^{t}_{1}(\mathbb{R})$, where
$N:\mathbb{C}^{\times}\rightarrow{\mathbb{R}^{\times}}$ is the
norm map.

Let $z\in\mathbb{C}^{\times}$. We have
\begin{equation}\label{base change of z}
\mathcal{BC}(\chi)(z)=\chi{(|z|^{2})}=|z|^{2it}.
\end{equation}
A generic element from $\mathcal{A}^{t}_{1}(\mathbb{C})$ has the
form
\begin{equation}\label{base change of z generic character}
\mu(z)=(\frac{z}{|z|})^{\ell}|z|^{it},
\end{equation}
where $\ell\in\mathbb{Z}$ and $t\in{S^1}$, as stated before. Viewing
the Pontryagin duals $\mathcal{A}^{t}_{1}(\mathbb{R})$ and
$\mathcal{A}^{t}_{1}(\mathbb{C})$ as topological spaces by
forgetting the group structure, and comparing (\ref{base change of
z}) and (\ref{base change of z generic character}), the base change
map can be defined as the following continuous map
\begin{displaymath}
\begin{array}{cccc}
\varphi : \mathcal{A}^{t}_{1}(\mathbb{R})\cong\mathbb{R}\times(\mathbb{Z}/2\mathbb{Z})&\longrightarrow
&\mathcal{A}^{t}_{1}(\mathbb{C})\cong\mathbb{R}\times\mathbb{Z}\\
\chi=(t,\varepsilon)&\mapsto{}&(2t,0)\\
\end{array}
\end{displaymath}
A compact subset of $\mathbb{R}\times\mathbb{Z}$ in the connected
component $\{\ell\}$ of $\mathbb{Z}$ has the form
$K\times\{\ell\}\subset\mathbb{R}\times\mathbb{Z}$, where
$K\subset\mathbb{R}$ is compact. We have
\begin{displaymath}
\varphi^{-1}(K\times\{\ell\})=
 \left\{ \begin{array}{ll}
\emptyset & ,\textrm{if } \ell\neq{0}\\
 \frac{1}{2}K\times\{\varepsilon\} & ,\textrm{if } \ell={0},
\end{array} \right.
\end{displaymath}
where $ \varepsilon\in\mathbb{Z}/2\mathbb{Z}$. Therefore
$\varphi^{-1}(K\times\{\ell\})$ is compact and $\varphi$ is proper.

\vspace{11pt}

The Case $n>1$. Base change determines a map
$\mathcal{BC}:\mathcal{A}^{t}_{n}(\mathbb{R})\rightarrow\mathcal{A}^{t}_{n}(\mathbb{C})$
of topological spaces. Let $X=X(M)/W_{\sigma}(M)$ be a connected
component of $\mathcal{A}_n^t(\mathbb{R})$. Then, $X$ is mapped
under $BC$ into a connected component $Y=Y(T)/W_{\sigma'}(T)$ of
$\mathcal{A}_n^t(\mathbb{C})$. Given a generalized principal
series representation
$$\pi(\chi_1,...,\chi_q,\xi_1,...,\xi_r)$$
where the $\chi_i$'s are ramified characters of
$\mathbb{C}^{\times}$ and the $\xi$'s are ramified characters of
$\mathbb{R}^{\times}$, then
$$\mathcal{BC}(\pi)=i_{G,B}(\chi_1,\chi_1^{\tau},...,\chi_q,\chi_q^{\tau},\xi_1\circ N,...,\xi_r\circ N).$$
Here, $N=N_{\mathbb{C}/\mathbb{R}}$ is the norm map and $\tau$ is the generator of $Gal(\mathbb{C}/\mathbb{R})$.

We associate to $\pi$ the usual parameters uniquely defined for each character $\chi$ and $\xi$. For simplicity, we write the set of parameters as a $(q+r)$-uple:
\[
(t,t')=(t_1,...,t_q,t'_1,...,t'_r)\in\mathbb{R}^{q+r}\cong X(M).
\]

Now, if $\pi(\chi_1,...,\chi_q,\xi_1,...,\xi_r)$ lies in the connected component defined by the fixed parameters $(\ell,\varepsilon)\in\mathbb{Z}^q\times(\mathbb{Z}/2\mathbb{Z})^r$, then
\[
(t,t')\in X(M)\mapsto(t,t,2t')\in Y(T)
\]
is a continuous proper map.

It follows that
\[
\mathcal{BC}: X(M)/W_{\sigma}(M) \rightarrow Y(T)/W_{\sigma'}(T)
\]
is continuous and proper since the orbit spaces are endowed with the quotient topology.
\end{proof}

\begin{thm}\label{main result archimedean base change n>1}
The functorial map induced by base change
$$K_{j}(C_{r}^{*}GL(n,\mathbb{C}))\stackrel{K_{j}(\mathcal{BC})}\longrightarrow{K_{j}(C_{r}^{*}GL(n,\mathbb{R}))}$$
is zero for $n>1$.
\end{thm}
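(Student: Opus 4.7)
The plan is to use the explicit description of $K_j(C^{*}_{r}\GL(n,\C)) \cong K^j(\mathcal{A}^t_n(\C))$ from Theorem \ref{K-theory GL(n,C)}: it is free abelian on Bott generators $\beta_{\sigma'}$, one for each generic $W$-orbit $\sigma' \in \widehat{T^{\circ}}$, arising from the corresponding component $\R^n_{\sigma'} \subseteq \mathcal{A}^t_n(\C)$. (Components with non-trivial isotropy are closed cones and contribute nothing to $K$-theory.) Hence it suffices to prove $\mathcal{BC}^{*}(\beta_{\sigma'}) = 0$ for every generic $\sigma'$.

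Next I would determine $\mathcal{BC}^{-1}(\R^n_{\sigma'})$ via the formula (\ref{archimedean base change map}). A component of $\mathcal{A}^t_n(\R)$ attached to a partition $n = 2q+r$ with discrete parameters $(\ell_1, \ldots, \ell_q;\, \tau_1, \ldots, \tau_r)$ is sent into the $\C$-component labelled by $(\ell_1, -\ell_1, \ldots, \ell_q, -\ell_q, 0, \ldots, 0)$, since $N(z) = z\bar z > 0$ kills the sign of each $\xi_j$ and so every $\xi_j \circ N$ contributes a zero entry. For this $W$-orbit to be generic, all entries must be distinct, forcing $r \leq 1$ together with the $|\ell_i|$ pairwise distinct and nonzero; in this case $W_\sigma(M) = 1$, so the source component is $\R^{q+r}$, and $\mathcal{BC}$ restricts to the linear injection
\[
f\colon \R^{q+r} \longrightarrow \R^n,\qquad (t_1, \ldots, t_q, t'_1, \ldots, t'_r) \longmapsto (t_1, t_1, \ldots, t_q, t_q, 2t'_1, \ldots, 2t'_r).
\]
Because $n > 1$ combined with $r \leq 1$ forces $q \geq 1$, we have $q + r < n$ strictly.

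The final step is a parity argument: any proper linear injection $f \colon \R^m \hookrightarrow \R^n$ with $m < n$ induces the zero map on $K^{*}$. Writing $f = \psi \circ i \circ \varphi$ with $\varphi, \psi$ linear automorphisms (inducing isomorphisms on $K$-theory) and $i$ the standard inclusion onto the first $m$ coordinates, the latter factors through the chain $\R^m \hookrightarrow \R^{m+1} \hookrightarrow \cdots \hookrightarrow \R^n$; each one-step map $K^{*}(\R^{k+1}) \to K^{*}(\R^k)$ is zero because source and target are concentrated in complementary degrees modulo $2$. Summing over the (at most two) preimage components yields $\mathcal{BC}^{*}(\beta_{\sigma'}) = 0$, proving the theorem. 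The main obstacle is the bookkeeping in the second paragraph: correctly reading off which source components land in a given generic target, and verifying that after descending to orbit spaces the base change map really is the linear injection claimed; the concluding vanishing is then purely formal.
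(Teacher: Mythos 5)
Your proof is correct, and the overall skeleton matches the paper's: in both cases the problem reduces to showing that the restriction of $\mathcal{BC}$ to a source component $\mathbb{R}^{q+r}$ landing in a target component $\mathbb{R}^{2q+r}$ (with $q\geq 1$) induces the zero map on $K$-theory, because everything else factors through a closed cone with vanishing $K$-groups. Where you diverge is in the concluding step and in the organization. The paper passes to one-point compactifications, obtains a map $S^{q+r}\to S^{2q+r}$ between spheres of different dimensions, and invokes nullhomotopy of such maps (Bott--Tu); you instead factor the proper linear injection $\mathbb{R}^m\hookrightarrow\mathbb{R}^n$ through the chain $\mathbb{R}^m\hookrightarrow\mathbb{R}^{m+1}\hookrightarrow\cdots\hookrightarrow\mathbb{R}^n$ and observe that each one-step pullback $K^{*}(\mathbb{R}^{k+1})\to K^{*}(\mathbb{R}^{k})$ is automatically zero by the $\mathbb{Z}/2$-grading of $K$-theory of Euclidean space. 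Your parity argument is more elementary (it needs no homotopy theory beyond Bott periodicity) and, because you organize the argument by pulling back from each generic component of $\mathcal{A}^t_n(\mathbb{C})$, it handles $n=2$ in the same breath as $n>2$; the paper must treat $n=2$ separately because its $q=0$ dichotomy (``source is a cone for $n\geq 3$'') fails there. One small point worth spelling out if you write this up: your ``at most two preimage components'' count comes from noting that $r\leq 1$ together with $r\equiv n\pmod 2$ forces $r=0$ (one component) when $n$ is even, and $r=1$ (two components, one per choice of $\tau\in\widehat{\mathbb{Z}/2\mathbb{Z}}$) when $n$ is odd.
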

\begin{proof} We start with the case $n>2$. Let $n=2q+r$ be a partition and $M$
the associated Levi subgroup of $GL(n,\mathbb{R})$. Denote by
$X_{\mathbb{R}}(M)$ the unramified characters of $M$. As we have
seen, $X_{\mathbb{R}}(M)$ is parametrized by $\mathbb{R}^{q+r}$. On
the other hand, the only Levi subgroup of $GL(n,\mathbb{C})$ for
$n=2q+r$ is the diagonal subgroup
$X_{\mathbb{C}}(M)=(\mathbb{C}^{\times})^{2q+r}$.

If $q=0$ then $r=n$ and both $X_{\mathbb{R}}(M)$ and
$X_{\mathbb{C}}(M)$ are parametrized by $\mathbb{R}^{n}$. But then
in the real case an element $\sigma\in{E_{2}({}^{0}M)}$ is given by
$$\sigma{=}i_{GL(n,\mathbb{R}),P}(\chi_{1}\otimes{...}\otimes\chi_{n}),$$
with $\chi_{i}\in\widehat{\mathbb{Z}/2\mathbb{Z}}$. Since
$n\geq{3}$ there is always repetition of the $\chi_{i}$'s. It
follows that the isotropy subgroups $W_{\sigma}(M)$ are all
nontrivial and the quotient spaces $\mathbb{R}^{n}/W_{\sigma}$ are
closed cones. Therefore, the $K$-theory groups vanish.

If ${q}\neq{0}$, then $X_{\mathbb{R}}(M)$ is parametrized by
$\mathbb{R}^{q+r}$ and $X_{\mathbb{C}}(M)$ is parametrized by
$\mathbb{R}^{2q+r}$ (see Propositions \ref{X(M),R} and
\ref{X(M),C}).

Base change creates a map
$$\mathbb{R}^{q+r}\longrightarrow\mathbb{R}^{2q+r}.$$
Composing with the stereographic projections we obtain a map
$$S^{q+r}\longrightarrow{S}^{2q+r}$$
between spheres. Any such map is nullhomotopic  \cite[Proposition
17.9]{BT}. Therefore, the induced $K$-theory map
$$K^{j}(S^{2q+r})\longrightarrow{K^{j}({S}^{q+r})}$$
is the zero map.

\vspace{11pt}

The Case $n=2$. For $n=2$ there are two Levi subgroups of
$GL(2,\mathbb{R})$, $M_{1}\cong GL(2,\mathbb{R})$ and the diagonal
subgroup $M_{2}\cong (\mathbb{R}^{\times})^{2}$. By Proposition
\ref{X(M),R} $X(M_{1})$ is parametrized by $\mathbb{R}$ and
$X(M_{2})$ is parametrized by $\mathbb{R}^{2}$. The group
$GL(2,\mathbb{C})$ has only one Levi subgroup, the diagonal
subgroup $M\cong({\mathbb{C}^{\times}})^{2}$. From Proposition
\ref{X(M),C} it is parametrized by $\mathbb{R}^{2}$.

Since $K^{1}(\mathcal{A}^{t}_{2}(\mathbb{C}))=0$ by Theorem 5.1,
we only have to consider the $K^0$ functor. The only contribution
to $K^{0}(\mathcal{A}^{t}_{2}(\mathbb{R}))$ comes from
$M_{2}\cong{(\mathbb{R}^{\times})^{2}}$ and we have (see Example
\ref{K-theory GL(2,R)})
$$K^{0}(\mathcal{A}^{t}_{2}(\mathbb{R}))\cong\mathbb{Z}.$$
For the Levi subgroup $M_{2}\cong(\mathbb{R}^{\times})^{2}$, base
change is
\begin{displaymath}
\begin{array}{cccc}
\mathcal{BC}:\mathcal{A}^{t}_{2}(\mathbb{R}) & \longrightarrow{} & \mathcal{A}^{t}_{2}(\mathbb{C})\\
\pi{(\xi_{1},\xi_{2})}&\mapsto{}&i_{GL(2,\mathbb{C}),B(\mathbb{C})}(\xi_{1}\circ{N},\xi_{2}\circ{N}),\\
\end{array}
\end{displaymath}
Therefore, it maps a class $[t_1,t_2]$, which lies in the connected component $(\varepsilon_1,\varepsilon_2)$, into the class $[2t_1,2t_2]$, which lies in the connect component $(0,0)$. In other words, base change maps a generalized principal series $\pi(\xi_1,\xi_2)$ into a nongeneric point of $\mathcal{A}_2^t(\mathbb{C})$. It follows from Theorem \ref{K-theory GL(n,C)} that
$$K^0(\mathcal{BC}):K^0(\mathcal{A}_2^t(\R))\rightarrow K^0(\mathcal{A}_2^t(\mathbb{C}))$$
is the zero map.
\end{proof}

\subsection{Base change in one dimension}

In this section we consider base change for $\GL(1)$.

\begin{thm}\label{main result archimedean base change n=1}
The functorial map induced by base change
$$K_{1}(C_{r}^{*}\GL(1,\mathbb{C}))\stackrel{K_{1}(\mathcal{BC})}\longrightarrow{K_{1}(C_{r}^{*}\GL(1,\mathbb{R}))}$$
is given by $K_{1}(\mathcal{BC})=\Delta\circ Pr$, where $Pr$ is the projection of the zero component of $K^1(\mathcal{A}_1^t(\mathbb{C}))$ into $\mathbb{Z}$ and $\Delta$ is the diagonal $\mathbb{Z}\rightarrow\mathbb{Z}\oplus\mathbb{Z}$.
\end{thm}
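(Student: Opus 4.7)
The plan is to compute the induced map on $K$-theory directly from the explicit description of $\mathcal{BC}$ as a continuous proper map of topological spaces given in the proof of Proposition~\ref{BC proper map}. Recall from Example~\ref{K-theory GL(1,R)} that $\mathcal{A}_1^t(\mathbb{R})\cong \mathbb{R}\sqcup\mathbb{R}$ so that $K^1(\mathcal{A}_1^t(\mathbb{R}))\cong \mathbb{Z}\oplus\mathbb{Z}$, while the tempered dual of $\mathbb{C}^{\times}$ is $\mathcal{A}_1^t(\mathbb{C})\cong \mathbb{R}\times\mathbb{Z}$ (indexed by $(t,\ell)$ giving the character $(z/|z|)^{\ell}|z|^{it}$), so Theorem~\ref{K-theory GL(n,C)} gives $K^1(\mathcal{A}_1^t(\mathbb{C}))\cong\bigoplus_{\ell\in\mathbb{Z}}K^1(\mathbb{R})\cong\bigoplus_{\ell\in\mathbb{Z}}\mathbb{Z}$, one generator per connected component.

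From the proof of Proposition~\ref{BC proper map} the map $\mathcal{BC}$ is, under these identifications, the continuous proper map
\[
\varphi:\mathbb{R}\times(\mathbb{Z}/2\mathbb{Z})\longrightarrow \mathbb{R}\times\mathbb{Z},\qquad (t,\varepsilon)\mapsto(2t,0).
\]
Since $\mathcal{A}_1^t(\mathbb{C})$ is the disjoint union of its connected components $\mathbb{R}\times\{\ell\}$, the induced pullback splits as a direct sum
\[
\varphi^{*}:\bigoplus_{\ell\in\mathbb{Z}}K^1(\mathbb{R}\times\{\ell\})\longrightarrow K^1\bigl(\mathbb{R}\times(\mathbb{Z}/2\mathbb{Z})\bigr).
\]
For $\ell\neq 0$ the preimage $\varphi^{-1}(\mathbb{R}\times\{\ell\})$ is empty, so those summands are killed; this is exactly the projection $Pr$ onto the $\ell=0$ component.

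The last step is to identify $\varphi^{*}$ on the $\ell=0$ summand. Here $\varphi^{-1}(\mathbb{R}\times\{0\})=\mathbb{R}\sqcup\mathbb{R}$, and on each copy $\mathbb{R}\times\{\varepsilon\}$ the map is $t\mapsto 2t$, which is an orientation-preserving proper homeomorphism of $\mathbb{R}$ and therefore acts as the identity on $K^1(\mathbb{R})\cong\mathbb{Z}$. Consequently the map $K^1(\mathbb{R}\times\{0\})\to K^1(\mathbb{R}\sqcup\mathbb{R})$ is the diagonal $\Delta:\mathbb{Z}\to \mathbb{Z}\oplus\mathbb{Z}$. Composing this with the projection coming from the $\ell\neq 0$ vanishing gives precisely $K_1(\mathcal{BC})=\Delta\circ Pr$.

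The only delicate point is the last orientation check: one has to verify that $t\mapsto 2t$ induces the identity (rather than multiplication by $2$ or by $-1$) on $K^1(\mathbb{R})$. This is where a little care is needed, but it follows from the fact that $K^1(\mathbb{R})$ is generated by the Bott class of $\mathbb{R}$ and any orientation-preserving proper self-homeomorphism of $\mathbb{R}$ is properly homotopic to the identity. Everything else is a routine bookkeeping of connected components, so this is the whole argument.
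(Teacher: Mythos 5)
Your proof is correct and takes essentially the same approach as the paper: identify the tempered duals, reduce to the explicit map $\varphi:\mathbb{R}\times(\mathbb{Z}/2\mathbb{Z})\to\mathbb{R}\times\mathbb{Z}$, $(t,\varepsilon)\mapsto(2t,0)$, kill the $\ell\neq 0$ components, and use that $t\mapsto 2t$ induces the identity on $K^1(\mathbb{R})$. You are slightly more careful than the paper at the last step, explicitly invoking \emph{proper} homotopy (the paper writes only ``homotopic to the identity''), which is the correct notion for $K$-theory of $C_0$-algebras, but the underlying argument is identical.
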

\begin{proof}
For $\GL(1)$, base change
$$\chi\in\mathcal{A}^t_1(\mathbb{R})\mapsto\chi\circ N_{\mathbb{C}/\mathbb{R}}\in\mathcal{A}^{t}_{1}(\mathbb{C})$$
induces a map
$$K^{1}(\mathcal{BC}):K^{1}(\mathcal{A}^{t}_{1}(\mathbb{C}))\rightarrow{}K^{1}(\mathcal{A}^{t}_{1}(\mathbb{R})).$$
Any character $\chi\in\mathcal{A}^{t}_{1}(\mathbb{R})$ is uniquely determined by a pair of parameters $(t,\varepsilon)\in\mathbb{R}\times\mathbb{Z}/2\mathbb{Z}$. Similarly, any character $\mu\in\mathcal{A}^{t}_{1}(\mathbb{C})$ is uniquely determined
by a pair of parameters $(t,\ell)\in\mathbb{R}\times\mathbb{Z}$. The discrete parameter $\varepsilon$ (resp., $\ell$) labels each connected component of $\mathcal{A}^{t}_{1}(\mathbb{R})=\mathbb{R}\sqcup\mathbb{R}$ (resp., $\mathcal{A}^{t}_{1}(\mathbb{C})=\bigsqcup_{\mathbb{Z}}\mathbb{R}$).

Base change maps each component $\varepsilon$ of
$\mathcal{A}^{t}_{1}(\mathbb{R})$ into the component $0$ of
$\mathcal{A}^{t}_{1}(\mathbb{C})$, sending $t\in\mathbb{R}$ to
$2t\in\mathbb{R}$. The map $t \mapsto 2t$ is homotopic to the
identity. At the level of $K^1$, the base change map is given by
$K_{1}(BC)=\Delta\circ Pr$, where $Pr$ is the projection of the
zero component of $K^1(\mathcal{A}_1^t(\mathbb{C}))$ into
$\mathbb{Z}$ and $\Delta$ is the diagonal
$\mathbb{Z}\rightarrow\mathbb{Z}\oplus\mathbb{Z}$.

\end{proof}

\section{Automorphic induction}

We begin this section by describing the action of $Gal(\mathbb{C}/\mathbb{R})$ on $\widehat{W_{\mathbb{C}}}=\widehat{\mathbb{C}^{\times}}$.
Take $\chi=\chi_{\ell,t}\in\widehat{\mathbb{C}^{\times}}$ and let $\tau$ denote the nontrivial element of $Gal(\mathbb{C}/\mathbb{R})$. Then,
$Gal(\mathbb{C}/\mathbb{R})$ acts on $\widehat{\mathbb{C}^{\times}}$ as follows:
\[
\chi^{\tau}(z)=\chi(\overline{z}).
\]
Hence,
\[
\chi^{\tau}_{\ell,t}(z)=(\frac{\overline{z}}{|z|})^{\ell}|z|^{it}_{\mathbb{C}}=(\frac{z}{|z|})^{-\ell}|z|^{it}_{\mathbb{C}}
\]
and we conclude that
\[
\chi^{\tau}_{\ell,t}(z)=\chi_{-\ell,t}(z).
\]

In particular,

\[
\chi^{\tau}=\chi\Leftrightarrow \ell=0\Leftrightarrow \chi=|.|^{it}_{\mathbb{C}}
\]

i.e, $\chi$ is unramified.

\bigskip

Note that $W_{\C}\subset W_{\R}$, with index $[W_{\R}:W_{\C}]=2$. Therefore, there is a natural induction map
\[
Ind_{\C/\R}:\mathcal{G}_1^t(\C)\rightarrow\mathcal{G}_{2}^t(\R).
\]
By the local Langlands correspondence for archimedean fields
\cite{BS,K}, there exists an automorphic induction map $\mathcal{AI}_{\C/\R}$ such that the following diagram commutes
\[
\xymatrix{\mathcal{A}_{1}^t(\mathbb{C})\ar[r]^{\mathcal{AI}_{\C/\R}}
 & \mathcal{A}_{2}^t(\mathbb{R})  \\
\mathcal{G}_{1}^t(\mathbb{C})\ar[r]_{Ind_{\C/\R}}\ar[u]^{_{\mathbb{C}}\mathcal{L}_{1}}
&
\mathcal{G}_{2}^t(\mathbb{R})\ar[u]_{_{\mathbb{R}}\mathcal{L}_{2}}
}
\]

The next result describes reducibility of induced representations.

\begin{proposition}
Let $\chi$ be a character of $W_{\C}$. We have:
\begin{itemize}
\item[(i)] If $\chi\neq\chi^{\tau}$ then $Ind_{\C/\R}(\chi)$ is irreducible;
\item[(ii)] If $\chi=\chi^{\tau}$ then $Ind_{\C/\R}(\chi)$ is reducible. Moreover, there exist $\rho\in\widehat{W_{\mathbb{R}}}$ such that $$Ind_{\C/\R}(\chi)=\rho\oplus\rho^{\tau}=\rho\oplus sgn.\rho,$$
    where $\rho_{|W_{\mathbb{C}}}=\chi$;
\item[(iii)] $Ind_{\C/\R}(\chi_1)\cong Ind_{\C/\R}(\chi_2)$ if and only if $\chi_1=\chi_2$ of $\chi_1=\chi_2^{\tau}$.
\end{itemize}
\end{proposition}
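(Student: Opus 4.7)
The plan is to apply Mackey theory to the index-two inclusion $W_{\mathbb{C}}\subset W_{\mathbb{R}}$ with non-trivial coset representative $j$. The crucial input, already recorded at the start of the section, is that conjugation by $j$ on $W_{\mathbb{C}}=\mathbb{C}^{\times}$ realises the Galois action $\tau$, since $jzj^{-1}=\overline{z}$; hence the $j$-twist of any character $\chi$ of $\mathbb{C}^{\times}$ is exactly $\chi^{\tau}$. Part (i) then drops out of Mackey's irreducibility criterion for an index-two (hence normal) subgroup: $\Ind_{W_{\mathbb{C}}}^{W_{\mathbb{R}}}\chi$ is irreducible iff $\chi$ is not fixed by coset-representative conjugation. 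A self-contained check is to compute $\langle\Ind\,\chi,\Ind\,\chi\rangle_{W_{\mathbb{R}}}=1+\delta_{\chi,\chi^{\tau}}$ via Frobenius reciprocity and the Mackey restriction formula.

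For part (ii), the hypothesis $\chi=\chi^{\tau}$ forces $\chi=|\cdot|^{it}_{\mathbb{C}}$ as computed above. Since $[j,z]=\overline{z}z^{-1}\in\ker\chi$, the character $\chi$ extends to a one-dimensional character $\rho$ of $W_{\mathbb{R}}$: concretely set $\rho(z)=|z|_{\mathbb{R}}^{2it}$ on $\mathbb{C}^{\times}$ and pick $\rho(j)\in\{\pm 1\}$, which is legal because $\rho(j)^{2}=\chi(-1)=1$. The two extensions differ by the sign character $\mathrm{sgn}$ of $W_{\mathbb{R}}/W_{\mathbb{C}}\cong\mathbb{Z}/2$, and by Frobenius reciprocity each occurs in $\Ind\,\chi$ with multiplicity one; a dimension count forces $\Ind\,\chi=\rho\oplus(\rho\otimes\mathrm{sgn})$. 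The identification $\rho^{\tau}=\rho\otimes\mathrm{sgn}$ in the statement is then to be read as a convention naming the second extension of $\chi$.

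Part (iii) follows from the intertwining number formula. Mackey's double coset computation for $W_{\mathbb{R}}=W_{\mathbb{C}}\sqcup W_{\mathbb{C}}j$ yields
\[\dim\Hom_{W_{\mathbb{R}}}(\Ind\,\chi_{1},\Ind\,\chi_{2})=\dim\Hom_{W_{\mathbb{C}}}(\chi_{1},\chi_{2})+\dim\Hom_{W_{\mathbb{C}}}(\chi_{1},\chi_{2}^{\tau}),\]
which is nonzero iff $\chi_{1}\in\{\chi_{2},\chi_{2}^{\tau}\}$. Combined with irreducibility from (i), this gives the desired equivalence in the generic case; the remaining reducible case $\chi_{1}=\chi_{1}^{\tau}$ is handled by matching the explicit decompositions from (ii). The only point that requires genuine care is the notational subtlety in (ii) around $\rho^{\tau}$, since $\rho$ already lives on $W_{\mathbb{R}}$ and literal conjugation by $j$ is inner (giving $\rho$ back). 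Once this is read as ``the other extension of $\chi|_{W_{\mathbb{C}}}$'', every step reduces to routine Mackey machinery.
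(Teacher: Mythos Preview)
Your proof is correct and follows essentially the same approach as the paper: both use Frobenius reciprocity together with the Mackey restriction $\mathrm{Res}\,\Ind(\chi)=\chi\oplus\chi^{\tau}$ for the index-two inclusion, and then appeal to semisimplicity. Your treatment of (ii) --- explicitly constructing the two extensions $\rho$ and checking $\rho(j)^{2}=\chi(-1)=1$ --- is considerably more detailed than the paper's terse sketch, and your remark on how to read the notation $\rho^{\tau}$ is well taken.
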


\begin{proof}
Apply Frobenius reciprocity

$$Hom_{W_{\mathbb{R}}}(Ind_{\C/\R}(\chi_1),Ind_{\C/\R}(\chi_2))\cong Hom_{W_{\mathbb{C}}}(\chi_1,\chi_2).$$
Now, $W_{\mathbb{R}}=W_{\mathbb{C}}\sqcup jW_{\mathbb{C}}$. Therefore, the restriction of $Ind_{\C/\R}(\chi)$ to $W_{\mathbb{C}}$ is $\chi\oplus \chi^{\tau}$. The result follow since $Ind_{\C/\R}(\chi)$ is semi-simple.
\end{proof}

\begin{proposition}\label{Prop_irred_W_R}
A finite dimensional continuous irreducible representation of $W_{\R}$ is either a character or isomorphic to some $Ind_{\C/\R}(\chi)$, with $\chi\neq\chi^{\tau}$.
\end{proposition}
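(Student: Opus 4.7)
The plan is to first apply Lemma \ref{irred. rep. W_R} to reduce to dimensions one and two. A one-dimensional irreducible representation is by definition a character, so I would immediately dispose of that case and concentrate on a two-dimensional continuous irreducible representation $\pi$ of $W_{\R}$.

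Since $W_{\C}=\C^{\times}$ is abelian, I would restrict $\pi$ to $W_{\C}$ and decompose $\pi|_{W_{\C}}\cong\chi_{1}\oplus\chi_{2}$ into one-dimensional characters. The key tool (essentially Clifford theory for the subgroup of index two) is that conjugation by $j\in W_{\R}$ permutes the characters appearing in $\pi|_{W_{\C}}$, and because $jzj^{-1}=\overline{z}$, this permutation sends a character $\chi$ of $W_{\C}$ to $\chi^{\tau}$.

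If $\chi_{1}\neq\chi_{2}$, then $j$ must swap the two one-dimensional isotypic subspaces, forcing $\chi_{2}=\chi_{1}^{\tau}$ and in particular $\chi_{1}\neq\chi_{1}^{\tau}$. By Frobenius reciprocity $\pi$ embeds into $\Ind_{\C/\R}(\chi_{1})$, which by part (i) of the preceding proposition is irreducible of dimension two, so $\pi\cong\Ind_{\C/\R}(\chi_{1})$ with the required condition $\chi_{1}\neq\chi_{1}^{\tau}$.

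The main step, where I expect most of the work, is ruling out the remaining possibility $\chi_{1}=\chi_{2}$. In a basis where $W_{\C}$ acts by the scalar matrix $\chi_{1}(z)\,I$, let $J$ denote the matrix of $\pi(j)$. Applying $\pi$ to the defining relations $jzj^{-1}=\overline{z}$ and $j^{2}=-1$ forces $\chi_{1}=\chi_{1}^{\tau}$ and $J^{2}=\chi_{1}(-1)\,I$; in particular $J$ commutes with every scalar matrix, so any eigenline of $J$ is stable under both $W_{\C}$ and $j$, hence under all of $W_{\R}$, contradicting the irreducibility of $\pi$. Combining the two cases completes the proof.
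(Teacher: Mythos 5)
Your proof is correct, and it is genuinely more detailed than what the paper offers: the paper's ``proof'' is a single line citing Lemma~\ref{irred. rep. W_R} (dimension one or two) and declaring the result immediate, leaving the reader to supply the argument that every two-dimensional irreducible is of the form $\Ind_{\C/\R}(\chi)$ with $\chi\neq\chi^{\tau}$. You fill that gap completely with the standard Clifford-theory argument for the index-two subgroup $W_{\C}\subset W_{\R}$: restrict to $W_{\C}$, split into characters, observe that conjugation by $j$ acts as $\tau$, and separate the two cases $\chi_{1}\neq\chi_{2}$ (where $j$ must swap the eigenlines, giving $\chi_{2}=\chi_{1}^{\tau}$ and, via Frobenius reciprocity together with irreducibility of the induced representation, $\pi\cong\Ind_{\C/\R}(\chi_{1})$) and $\chi_{1}=\chi_{2}$ (where $W_{\C}$ acts by scalars, so an eigenline of $\pi(j)$ is $W_{\R}$-stable, contradicting irreducibility). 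Both cases are handled correctly; in particular, in the scalar case the relation $\pi(j)\pi(z)\pi(j)^{-1}=\pi(\bar z)$ does force $\chi_{1}=\chi_{1}^{\tau}$ and the existence of an eigenvector of $\pi(j)$ over $\C$ is automatic, so the contradiction is clean. In short, the paper outsources the content to Knapp's lemma, whereas your argument is self-contained and would make the proposition genuinely follow from the stated ingredients.
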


\begin{proof}
It follows immediately from Lemma \ref{irred. rep. W_R}
\end{proof}

\subsection{The automorphic induction map}

In this section we describe automorphic induction map as a map of topological spaces. We begin by considering $n=1$.

Let $\chi=\chi_{\ell,t}$ be a character of $W_{\C}$. If $\chi\neq\chi^{\tau}$, by proposition \ref{eq. L-parameters}, \\ $\phi_{\ell,t}\simeq\phi_{-\ell,t}$. Hence,
$$\mathcal{AI}_{\C/\R}(_{\mathbb{C}}\mathcal{L}_{1}(\chi_{\ell,t}))=D_{|\ell|}\otimes|det(.)|^{it}.$$
On the other hand, if $\chi=\chi^{\tau}$ then $\chi=\chi_{0,t}$ and $\chi(z)=|z|^{it}_{\mathbb{C}}$. Therefore,

$$\mathcal{AI}_{\C/\R}(_{\mathbb{C}}\mathcal{L}_{1}(|.|_{\mathbb{C}}^{it}))= {}_{\mathbb{R}}\mathcal{L}_{2}(\rho\oplus sgn.\rho)=\pi(\rho,\rho^{-1}),$$
where $\pi(\rho,\rho^{-1})$ is a reducible principal series and $\rho$ is the character of\\
 $\R^{\times}\simeq W_{\R}^{ab}$ associated with $\chi_{0,t}=|.|_{\mathbb{C}}^{it}$ via class field theory, i.e. $\rho_{|W_{\C}}=\chi$.

\medskip

Recall that
$$\mathcal{A}_1^t(\C)\cong\bigsqcup_{\ell\in\mathbb{Z}}\mathbb{R}$$
and
$$\mathcal{A}_2^t(\R)\cong\Bigg(\bigsqcup_{\ell\in\mathbb{N}}\mathbb{R}\Bigg)\bigsqcup(\mathbb{R}^2/S_2)\bigsqcup(\mathbb{R}^2/S_2)\bigsqcup\mathbb{R}^2.$$

As a map of topological spaces, automorphic induction for $n=1$ may be described as follows:
\begin{equation}
(t,\ell)\in\mathbb{R}\times\mathbb{Z}\mapsto(t,|\ell|)\in\mathbb{R}\times\mathbb{N}, \textrm{ if }\ell\neq 0
\end{equation}
\begin{equation}
(t,0)\in\mathbb{R}\times\mathbb{Z}\mapsto (t,t)\mapsto\mathbb{R}^2, \textrm{ if }\ell=0.
\end{equation}
More generally, let $\chi_1\oplus...\oplus\chi_n$ be an $n$ dimensional $L$-parameter of $W_{\C}$. Then, either $\chi_k\neq\chi_k^{\tau}$ for every $k$, in which case automorphic induction is
\begin{equation}\label{top aut ind for n}
\mathcal{AI}_{\C/\R}(_{\mathbb{C}}\mathcal{L}_{n}(\chi_1\oplus...\oplus\chi_n))=D_{|\ell_1|}\otimes|det(.)|^{it_1}\oplus...\oplus D_{|\ell_n|}\otimes|det(.)|^{it_n}
\end{equation}
or for some $k$ (possibly more than one), $\chi_k=\chi_k^{\tau}$, in which case we have
\begin{equation}
\mathcal{AI}_{\C/\R}(_{\mathbb{C}}\mathcal{L}_{n}(\chi_1\oplus...\oplus|.|_{\mathbb{C}}^{it_k}\oplus...\oplus\chi_n))=
D_{|\ell_1|}\otimes|det(.)|^{it_1}\oplus...
\end{equation}
$$ \oplus\pi(\rho_k,\rho_k^{-1})\oplus...\oplus D_{|\ell_n|}\otimes|det(.)|^{it_n}.$$

In order to describe automorphic induction as a map of topological spaces, it is enough to consider components of $\mathcal{A}_n^t(\C)$ with generic $W$-orbit. For convenience, we introduce the following notation:

if $(t_1,...,t_n)$ is in the component of $\mathcal{A}_n^t(\C)$ labeled by $(\ell_1,...,\ell_n)$, i.e
$$(t_1,...,t_n)\in (\R\times\{\ell_1\})\times ... \times (\R\times\{\ell_n\})$$
we write simply
$$(t_1,...,t_n)\in\R^n_{(\ell_1,...,\ell_n)} \, , \,\,\, \ell_i\in\Z.$$

There are two cases:

\textbf{Case 1:} $\chi_k\neq\chi_k^{\tau}$, i.e. $\ell_k\neq 0$, for every $k$,
\begin{equation}
\mathcal{AI} : \, (t_1,...,t_n)\in\R^n_{(\ell_1,...,\ell_n)}\longmapsto (t_1,...,t_n)\in\R^n_{(|\ell_1|,...,|\ell_n|)}
\end{equation}
So, $(|\ell_1|,...,|\ell_n|)\in\N^n$.

\textbf{Case 2:} if there are $0<m<n$ characters such that $\chi_k=\chi_k^{\tau}$, then
\begin{equation}
\mathcal{AI} : \, (t_1,...,t_k,...,t_n)\in\R^n_{(\ell_1,...,0,...,\ell_n)}\longmapsto (t_1,...,t_k,t_k,...,t_n)\in(\R^n/W)_{(|\ell_1|,...,|\ell_n|)^*}
\end{equation}
where $(|\ell_1|,...,|\ell_n|)^*\in\N^{n-m}$ means that we have deleted the $m$ labels corresponding to $\ell_k=0$. Note that if $m>1$, necessarily $W\neq 0$.

We have the following result

\begin{proposition}
The automorphic induction map
$$\mathcal{AI}_{\C/\R}:\mathcal{A}^{t}_{n}(\mathbb{C})\rightarrow\mathcal{A}^{t}_{2n}(\mathbb{R})$$
is a continuous proper map.
\end{proposition}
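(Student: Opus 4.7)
The plan is to verify continuity and properness component-by-component, exploiting the explicit piecewise description of $\mathcal{AI}_{\C/\R}$ already worked out above. The crucial preliminary observation is that each connected component $\R^n_{(\ell_1,\dots,\ell_n)}$ of $\mathcal{A}_n^t(\C)$ is mapped into a single connected component of $\mathcal{A}_{2n}^t(\R)$, the target component being determined by the unordered tuple $(|\ell_1|,\dots,|\ell_n|)$ together with the number $m$ of indices $k$ with $\ell_k=0$. Since $\mathcal{A}_n^t(\C)$ is a disjoint union of open-closed components, it therefore suffices to analyze the restricted map on one source component at a time.

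For continuity, I would note that on a source component with $m$ zero parameters, $\mathcal{AI}_{\C/\R}$ factors as a linear embedding $\R^n\hookrightarrow\R^{n+m}$ (duplicating each coordinate $t_k$ with $\ell_k=0$ according to Case 2) followed by the quotient projection $\R^{n+m}\to\R^{n+m}/W_\sigma$ onto the corresponding target orbit space. The embedding is linear and the quotient projection is continuous by definition of the quotient topology; composing gives continuity on each source component, and hence continuity of the global map.

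For properness, let $K\subseteq\mathcal{A}_{2n}^t(\R)$ be compact. Since the target decomposes as a disjoint union of open-closed components, $K$ meets only finitely many of them. I would next argue that only finitely many source components can map into these target components, since each target component determines $(|\ell_1|,\dots,|\ell_n|)$ up to the action of a finite symmetry group, and each such unsigned tuple has at most $2^n$ signed preimages. On each of these finitely many source components, the preimage of $K$ is the preimage of a compact set under a closed linear injection composed with a quotient by a finite group; both of these are proper maps, so their composition is proper and the preimage is compact. Assembling these finitely many compact pieces yields compactness of $\mathcal{AI}_{\C/\R}^{-1}(K)$.

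The main subtlety to be checked carefully is that the quotient map $\R^{n+m}\to\R^{n+m}/W_\sigma$ behaves well enough for properness to pull back through it, and that the linear embedding $(t_1,\dots,t_n)\mapsto(t_1,\dots,t_k,t_k,\dots,t_n)$ is indeed closed (its image is the intersection of finitely many diagonal hyperplanes). Once this is granted, the rest is a routine finite enumeration and standard facts about compositions and finite disjoint unions of proper maps.
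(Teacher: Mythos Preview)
Your proposal is correct and follows essentially the same approach as the paper, which simply refers the reader back to the proof of Proposition~\ref{BC proper map}: a component-by-component analysis in which the map on parameters is an explicit linear (hence continuous and proper) map, descended to the orbit spaces via the quotient topology. Your write-up is in fact more detailed than the paper's, particularly in the finiteness argument for the number of source components hitting a given compact set; the one point you gloss over---that source components of $\mathcal{A}_n^t(\C)$ are themselves orbit spaces $\R^n/W_{\sigma'}$ rather than just $\R^n$---is harmless, since quotienting by a further finite group on the source side does not disturb properness.
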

The proof follows from the above discussion and is similar to that of proposition \ref{BC proper map}.

\begin{example}
Consider $n=3$. Then,
$$\mathcal{A}_3^t(\C)\simeq \bigsqcup_{\sigma}\R^3/W_{\sigma}$$
and
$$\mathcal{A}_6^t(\R)\simeq (\bigsqcup_{\ell\in\mathbb{N}^3}\R^3) \sqcup (\bigsqcup_{\ell'\in\mathbb{N}^2}\R^4) \sqcup \mathcal{C},$$
where $\mathcal{C}$ is a disjoint union of cones.

\smallskip

Let $\chi_1\oplus\chi_2\oplus\chi_3$ denote a $3$-dimensional $L$-parameter of $W_{\C}$. We have the following description of $\mathcal{AI}_{\C/\R}$ as a map of topological spaces:

\medskip

$\bullet$ $\chi_1\neq\chi_1^{\tau}, \chi_2\neq\chi_2^{\tau}, \chi_3\neq\chi_3^{\tau}$

\smallskip

$$(t_1,t_2,t_3)\in\R^3_{(\ell_1,\ell_2,\ell_3)}\longmapsto (t_1,t_2,t_3)\in\R^3_{(|\ell_1|,|\ell_2|,|\ell_3|)}$$
with $\ell_i\in\Z\backslash\{0\}$.

\medskip

$\bullet$ $\chi_1=\chi_1^{\tau}, \chi_2\neq\chi_2^{\tau}, \chi_3\neq\chi_3^{\tau}$

\smallskip

$$(t_1,t_2,t_3)\in\R^3_{(0,\ell_2,\ell_3)}\longmapsto (t_1,t_1,t_2,t_3)\in(\R^4/W)_{(|\ell_2|,|\ell_3|)}$$
with $\ell_i\in\Z\backslash\{0\}$. Similar for the cases $(\ell_1,0,\ell_3)$ and $(\ell_1,\ell_2,0)$.

\medskip

$\bullet$ $\chi_1=\chi_1^{\tau}, \chi_2=\chi_2^{\tau}, \chi_3\neq\chi_3^{\tau}$

\smallskip

$$(t_1,t_2,t_3)\in\R^3_{(0,0,\ell_3)}\longmapsto (t_1,t_1,t_2,t_2,t_3)\in(\R^5/W)_{(|\ell_3|)}$$
with $\ell_3\in\Z\backslash\{0\}$. Similar for the cases $(\ell_1,0,0)$ and $(0,\ell_2,0)$.

\medskip

$\bullet$ $\chi_1=\chi_1^{\tau}, \chi_2=\chi_2^{\tau}, \chi_3=\chi_3^{\tau}$

\smallskip

$$(t_1,t_2,t_3)\in\R^3_{(0,0,0)}\longmapsto (t_1,t_1,t_2,t_2,t_3,t_3)\in(\R^6/W)$$
\end{example}

\subsection{Automorphic induction in one dimension}

Automorphic induction $\mathcal{AI}$ induces a $K$-theory map at the level of $K$-theory groups $K^1$:

\begin{eqnarray}
K^1(\mathcal{AI}):K^1(\mathcal{A}_2^t(\R))\rightarrow K^1(\mathcal{A}_1^t(\C)).
\end{eqnarray}

We have
$$K^1(\mathcal{A}_2^t(\R))\cong\bigoplus_{\ell\in\mathbb{N}}\mathbb{Z}.$$

Each class of $1$-dimension $L$-parameters of $W_{\C}$ (characters of $\mathbb{C}^{\times}$)
$$[\chi]=[\chi_{\ell,t}]=[\chi_{-\ell,t}] \text{ } (\ell\neq 0)$$
contributes with one generator to $K^1(\mathcal{A}_2^t(\R))$. Note that, under $\mathcal{AI}$, this is precisely the parametrization given by the discrete series $D_{|\ell|}$.

\bigskip

On the other hand,
$$K^1(\mathcal{A}_1^t(\C))\cong\bigoplus_{\ell\in\mathbb{Z}}\mathbb{Z}.$$

Again, each class (of characters of $\mathbb{C}^{\times}$) $[\chi]$ contributes with a generator to $K^1(\mathcal{A}_1^t(\C))$, only this time $[\chi_{\ell,t}]\neq[\chi_{-\ell,t}]$, i.e, $\ell$ and $-\ell$ belong to different classes.

\medskip

Note that we may write

$$K^1(\mathcal{A}_2^t(\R))\cong\bigoplus_{\textrm{ Discrete series }}\mathbb{Z}=\bigoplus_{[D_{|\ell|}]}\mathbb{Z}=\bigoplus_{\ell\in\mathbb{N}}\mathbb{Z}$$

and

$$K^1(\mathcal{A}_1^t(\C))\cong\bigoplus_{\widehat{\mathbb{C}^{\times}}}\mathbb{Z}=\bigoplus_{[\chi_{\ell}]}\mathbb{Z}=\bigoplus_{\ell\in\mathbb{Z}}\mathbb{Z}.$$

The automorphic induction map

\[
K^1(\mathcal{AI}):K^1(\mathcal{A}_2^t(\R))\rightarrow K^1(\mathcal{A}_1^t(\C))
\]

may be interpreted, at the level of $K^1$, as a kind of "shift" map

$$[D_{|\ell|}]\mapsto [\chi_{|\ell|}]$$

More explicitly, the "shift" map is

$$\bigoplus_{\ell\in\mathbb{N}}\mathbb{Z}\rightarrow \bigoplus_{\ell\in\mathbb{Z}}\mathbb{Z}, ([D_1], [D_2],...)\mapsto (...,0,0,[\chi_1], [\chi_2],...)$$
where the image under $K^1(\mathcal{AI})$ on each component of the right hand side with label $\ell\leq 0$ is zero (because $K^1(\mathcal{AI})$ is a group homomorphism so it must map zero into zero).

\subsection{Automorphic induction in $n$ dimensions}

In this section we consider automorphic induction for $GL(n)$. Contrary to base change (see theorems \ref{main result archimedean base change n>1} and \ref{main result archimedean base change n=1}), the $K$-theory map of automorphic induction is nonzero for every $n$.

\begin{thm}\label{main result automorphic induction n}
The functorial map induced by automorphic induction
$$K_{j}(C_{r}^{*}\GL(2n,\mathbb{R}))\stackrel{K_{j}(\mathcal{AI})}\longrightarrow{K_{j}(C_{r}^{*}\GL(n,\mathbb{C}))}$$
is given by
$$[D_{|\ell_1|}\otimes...\otimes D_{|\ell_n|}]\longmapsto [\chi_{|\ell_1|}\oplus...\oplus\chi_{|\ell_n|}]$$
if $n\equiv j\, (mod\, 2)$ and $\chi_k\neq\chi_k^{\tau}$ for every $k$, and is zero otherwise.

\smallskip

Here, $[D_{|\ell_1|}\otimes...\otimes D_{|\ell_n|}]$ denotes the generator of the component $\Z_{(|\ell_1|,...,|\ell_n|)}$ of $K_{j}(C_{r}^{*}\GL(2n,\mathbb{R}))$ and $[\chi_{|\ell_1|}\oplus...\oplus\chi_{|\ell_n|}]$ is the generator of the component $\Z_{(|\ell_1|,...,|\ell_n|)}$ of $K_{j}(C_{r}^{*}\GL(n,\mathbb{C}))$
\end{thm}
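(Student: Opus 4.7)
The plan is to translate the statement into a pullback on topological $K$-theory and then analyze it component by component. By the Morita-equivalence identification $K_j(C^*_r \GL(m,F)) \cong K^j(\mathcal{A}_m^t(F))$ used throughout Section~3, together with the continuity and properness of $\mathcal{AI}$ established above, the functorial map equals the pullback
$$\mathcal{AI}^*\colon K^j(\mathcal{A}_{2n}^t(\mathbb{R})) \longrightarrow K^j(\mathcal{A}_n^t(\mathbb{C})).$$
Because $\mathcal{AI}\colon \mathcal{A}_n^t(\mathbb{C}) \to \mathcal{A}_{2n}^t(\mathbb{R})$ sends each connected component into a single component of the codomain, the computation decomposes into a contribution for each component.

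First I would dispose of the easy cases. When $j \not\equiv n \pmod 2$, Theorem~\ref{K-theory GL(n,C)} gives $K^j(\mathcal{A}_n^t(\mathbb{C})) = 0$, which forces the map to be zero. When $j \equiv n \pmod 2$, Theorem~\ref{K-theory GL(n,C)} describes the $\mathbb{C}$-side generators as indexed by generic $W$-orbits $[(\ell_1,\ldots,\ell_n)] \in \mathbb{Z}^n/S_n$ with pairwise distinct entries, and Theorem~\ref{K-theory GL(2q)} describes the $\mathbb{R}$-side generators---coming entirely from the partition $2n = 2 + \cdots + 2$---as classes $[D_{|\ell_1|}\otimes\cdots\otimes D_{|\ell_n|}]$ indexed by distinct tuples in $\mathbb{N}^n/S_n$. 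Every other summand of $K^j(\mathcal{A}_{2n}^t(\mathbb{R}))$ is a closed cone in the sense of Definition~\ref{closed cone} and vanishes by Lemma~\ref{lemma1}, so it contributes nothing.

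It then remains to compute $\mathcal{AI}^*$ between matching components using the explicit formulas of Section~6.1. In Case~2 (some $\chi_k = \chi_k^\tau$, equivalently some $\ell_k = 0$) the image $\mathcal{AI}(\mathbb{R}^n_{(\ell_1,\ldots,\ell_n)})$ lies in an $(\mathbb{R}^m/W)$-component with $W \neq 1$, which is a closed cone and carries no class in $K^j(\mathcal{A}_{2n}^t(\mathbb{R}))$; such components produce no $\mathbb{R}$-side generator, so they contribute trivially. In Case~1 (all $\ell_k \neq 0$) the map is the componentwise identity $(t_1,\ldots,t_n) \mapsto (t_1,\ldots,t_n)$ between two copies of $\mathbb{R}^n$, and pullback along this identity is an isomorphism on $K^j(\mathbb{R}^n) \cong \mathbb{Z}$. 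This matches the $\mathbb{R}$-side generator $[D_{|\ell_1|}\otimes\cdots\otimes D_{|\ell_n|}]$ to the $\mathbb{C}$-side generator $[\chi_{|\ell_1|}\oplus\cdots\oplus\chi_{|\ell_n|}]$ on the summand indexed by the same positive multi-index, which is exactly the formula claimed.

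The main obstacle I expect is the combinatorial bookkeeping: the $2^n$ sign patterns $(\epsilon_1|\ell_1|,\ldots,\epsilon_n|\ell_n|)$ with $\epsilon_i \in \{\pm 1\}$ all produce distinct generic $\mathbb{C}$-side components that map to the same $\mathbb{R}$-side component. One must reconcile this covering behaviour with the one-dimensional ``shift'' computation of Section~6.2 and verify that the notational identification adopted in the theorem picks out precisely the positive-index generator $[\chi_{|\ell_1|}\oplus\cdots\oplus\chi_{|\ell_n|}]$ as the image claimed.
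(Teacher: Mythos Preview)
Your approach is the same as the paper's: translate to the topological pullback $\mathcal{AI}^*$ and work component by component. The gap is in your Case~2. You assert that whenever some $\ell_k = 0$ the image lies in a component with $W \neq 1$, hence a closed cone; but this is false when \emph{exactly one} $\ell_k$ vanishes. In that situation the target component of $\mathcal{A}_{2n}^t(\mathbb{R})$ corresponds to the partition $2n = 2(n-1) + 2$, with $n-1$ discrete-series blocks and two $1$-blocks carrying the distinct characters $id$ and $sgn$; when the remaining $|\ell_i|$ are pairwise distinct the isotropy is trivial and the component is $\mathbb{R}^{n+1}$, not a cone. (The paper itself flags this just before the theorem: ``if $m>1$, necessarily $W \neq 0$''.) The paper fills the gap by splitting your Case~2 into two subcases according to the parity of $m$, the number of vanishing $\ell_k$: for $m$ odd the target has dimension $n+m \not\equiv n \pmod 2$, so $K^j(\mathbb{R}^{n+m}) = 0$ by Bott parity; for $m \geq 2$ even there are $2m \geq 4$ one-blocks, the isotropy is forced to be nontrivial, and the cone argument applies. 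Your conclusion in Case~2 is right, but the justification needs this odd/even refinement.

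The closing ``obstacle'' about the $2^n$ sign patterns is a notational looseness that the paper shares rather than resolves: since $\mathcal{AI}^*$ is a pullback and every generic $\mathbb{C}$-component $\mathbb{R}^n_{(\epsilon_1|\ell_1|,\ldots,\epsilon_n|\ell_n|)}$ maps identically to the single $\mathbb{R}$-component indexed by $(|\ell_1|,\ldots,|\ell_n|)$, each of them receives the same generator, and the statement should be read as singling out the positive-index representative.
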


\begin{proof}

Let $0\leq m<n$ be the number of characters $\chi_k$ with $\chi_k=\chi_k^{\tau}$.

\smallskip

\textbf{Case $1$:} $m=0$

\smallskip

In this case, $\chi_k\neq\chi_k^{\tau}$ for every $k$. Each character $\chi_{\ell_k}$, $\ell_k\neq 0$, is mapped via the local langlands correspondence into a discrete series $D_{|\ell_k|}$. At the level of $K$-theory, a generator $[D_{|\ell_k|}]$ is mapped into $[\chi_{|\ell_k|}]$. The result follows from (\ref{top aut ind for n}).

\smallskip

\textbf{Case $2$:} $m>0$ odd

\smallskip

Then, if $n\equiv j\, (mod\, 2)$, $K^j(\R^{n+m})=0$ and $K_j(\mathcal{AI})$ is zero.

\smallskip

\textbf{Case $3$:} $m>0$ is even

\smallskip

In this case $K^j(\R^{n})=K^j(\R^{n+m})$. However, $X_{\R}(M)\simeq \R^{n+m}$ corresponds precisely to the partition of $2n$ into $1's$ and $0's$ given by
$$2n=2(n-m)+2m$$
Hence, the number of $1's$ in the partition is $2m\geq 4$. It follows that $(t_1,...,t_n)$ is mapped into a cone and, as a consequence, $K_j(\mathcal{AI})$ is zero.

This concludes the proof.
\end{proof}

\section{Connections with the Baum-Connes correspondence}

The standard maximal compact subgroup of $\GL(1,\C)$ is the circle
group $U(1)$, and the maximal compact subgroup of $\GL(1,\R)$ is
$\Z/2\Z$. Base change for $K^1$ creates a map
\[
\mathcal{R}(\U(1))\longrightarrow\mathcal{R}(\mathbb{Z}/2\mathbb{Z})
\]
where $\mathcal{R}(\U(1))$ is the representation ring of the
circle group $\U(1)$ and $\mathcal{R}(\mathbb{Z}/2\mathbb{Z})$ is
the representation ring of the group $\mathbb{Z}/2\mathbb{Z}$.
This map sends the trivial character of $\U(1)$ to
$1\oplus\varepsilon$, where $\varepsilon$ is the nontrivial
character of $\mathbb{Z}/2\mathbb{Z}$, and sends all the other
characters of $\U(1)$ to zero.

This map has an interpretation in terms of $K$-cycles. The real
line $\R$ is a universal example for the action of $\R^{\times}$
and $\C^{\times}$. The $K$-cycle \[(C_0(\R), L^2(\R),id/dx)\] is
equivariant with respect to $\CC$ and $\RR$, and therefore
determines a class $\slashi
\partial_{\C} \in K_1^{\CC}(\underline{E}\CC)$ and a class
$\slashi \partial_{\R} \in K_1^{\RR}(\underline{E}\RR)$.  On the
left-hand-side of the Baum-Connes correspondence, base change in
dimension $1$ admits the following description in terms of Dirac
operators:

 \[\slashi \partial_{\C} \mapsto (\slashi \partial_{\R}, \slashi \partial_{\R})\]
\medskip

It would be interesting to interpret the automorphic induction map at the level of representation rings:

$$\mathcal{AI}^* : \mathcal{R}(O(2n))\longrightarrow \mathcal{R}(U(n)).$$




S. Mendes, ISCTE, Av. das For\c{c}as Armadas, 1649-026, Lisbon,
Portugal\\ Email: sergio.mendes@iscte.pt\\

R.J. Plymen, Southampton University, Southampton SO17 1BJ,  England
\emph{and} School of Mathematics, Manchester University, Manchester M13 9PL, England
\\ Email: r.j.plymen@soton.ac.uk \quad plymen@manchester.ac.uk\\

\end{document}